\newcommand{\Cc}{\mathcal{C}}
\newcommand{\ess}{{\rm{ess}}}
\newcommand{\N}{\mathbb{N}}
\newcommand{\R}{\mathbb{R}}
\newcommand{\C}{\mathbb{C}}
\newcommand{\D}{\mathbb{D}}
\newcommand{\T}{\mathbb{T}}
\newcommand{\ds}{\displaystyle}
\newcommand{\si}{\sigma}
\newcommand{\al}{\alpha}
\newcommand{\la}{\lambda}
\newcommand{\ap}{\approx}
\newcommand{\Sg}{\textbf{S}}
\newcommand{\Id}{{\rm{Id}}}
\newcommand{\bsl}{\backslash}
\newtheorem{theorem}{Theorem}[section]
\newtheorem{proposition}[theorem]{Proposition}
\newtheorem{lemma}[theorem]{Lemma}
\newtheorem{remark}[theorem]{Remark}
\numberwithin{equation}{section}
\title[Lieb-Thirring inequalities for fractional Schr\"odinger operator]{Note on Lieb-Thirring type inequalities for a complex perturbation of fractional Laplacian}
\begin{document}
\author{Cl\'{e}ment Dubuisson}
\address{Institut de Math\'{e}matiques de Bordeaux
Universit\'{e} de Bordeaux
351, cours de la Lib\'eration
F-33405 Talence cedex}
\email{clement.dubuisson@math.u-bordeaux.fr}

\subjclass[2010]{primary 35P15, secondary 30C35, 47A75, 47B10}
\keywords{Fractional Schr\"odinger operator, complex perturbation, discrete spectrum, Lieb-Thirring type inequality}
\begin{abstract}
For $s>0$, let $H_0=(-\Delta)^s$ be the fractional Laplacian. In this paper, we obtain
Lieb-Thirring type inequalities for the fractional Schr\"odinger operator defined as
\[
H=H_0+V,
\]
where $V\in L^p(\mathbb{R}^d), p\ge 1, d\ge 1,$ is a complex-valued potential.
Our methods are based on results of articles by Borichev-Golinskii-Kupin \cite{BoGoKu} and Hansmann \cite{Ha1}.
\end{abstract}

\maketitle

\section{Introduction.}

The article by Abramov-Aslanyan-Davies \cite{AbAsDa} gave rise to many papers devoted to the study of eigenvalues  for complex perturbations of various self-adjoint  operators.
Recently, the fractional Laplacian $(-\Delta)^s, s>0$, received an increasing interest  due to its numerous applications in applied mathematics and physics (see \cite{DiNPaVa} for references).

For $s>0$, the fractional Laplacian $(-\Delta)^s$ is defined with the help of the functional calculus applied to the nonnegative self-adjoint operator \ $-\Delta$.
That is, $(-\Delta)^s$ is essentially self-adjoint on $\Cc^\infty_c(\R^d;\C)$,
and the domain of its closure is the fractional Sobolev space
\[
W^{s,2}(\R^d,\C):=\{f \in L^2(\R^d), \ds\int_{\R^d}(1+|\zeta|^{2s}) |\widehat{f}(\zeta)|^2 d\zeta < +\infty\},
\]
where $\widehat{f}$ is the Fourier transform of $f$
(see \cite[Section 3.1]{DiNPaVa}).
By the spectral mapping Theorem, the spectrum of $(-\Delta)^s$ is $\R^+=[0;+\infty[$.

We consider the fractional Schr\"odinger operator
\begin{equation}\label{def:H}
H=(-\Delta)^s + V,
\end{equation}
where $V$ is the operator of multiplication by the complex-valued function $V$
and we note $H_0:=(-\Delta)^s$.
In particular, the perturbed operator $H$ is not supposed to be self-adjoint.
We assume that $V$ is a relatively compact perturbation of $H_0$,
\textit{i.e.} $\mathrm{dom}(H_0) \subset \mathrm{dom}(V)$ and $V(\la-H_0)^{-1}$ is compact for $\la \in \C\backslash \si(H_0)$.
The spectrum, the essential spectrum, and the discrete spectrum of $H$ will be denoted by
$\si(H), \si_{\ess}(H),$ and $\si_d(H)$, respectively.
Here, the discrete spectrum is the set of all eigenvalues which are discrete points of the spectrum whose corresponding eigenspaces (or rootspaces) are finite dimensional.
Throughout this work, eigenvalues are counted according to their algebraic multiplicity.
The essential spectrum of $H$ is defined by 
$\si_{\ess}(H)=\{\la \in \C, \la-H \mbox{ is not Fredholm}\}$,
where a closed operator is a Fredholm operator if it has a closed range and both its kernel and cokernel are finite dimensional.
In our situation, $\si_{\ess}(H):=\si(H)\bsl \si_d(H)$.
For more details on these definitions, see \cite[Chapter IX]{EdEv}.
By Weyl's Theorem on essential spectrum (see \cite[Theorem XIII.14]{ReSi4}), we have 
\begin{equation*}
\si_{\ess}(H)=\si_{\ess}(H_0)=\si(H_0)=\R^+,
\end{equation*}
and the only possible accumulation points of $\si_d(H)$ lie on $\si_{\ess}(H)$.

Our interest in the present topic was motivated by the article of Frank-Lieb-Seiringer
\cite{FrLiSe} on Hardy-Lieb-Thirring inequalities for fractional Schr\"odinger operator with real-valued potential $V$.
As an application, the authors give a proof of the stability of relativistic matter.
In particular, for $0<s< \min\{1;\frac{d}{2}\}$, $\gamma >0$, and $V_- \in L^{\gamma+d/2s}(\R^d)$, formula (5.11) in \cite{FrLiSe} says that
\begin{equation}\label{i:FrLiSe1}
\ds\sum_{\la\in \si_d(H)} |\la|^{\gamma}\leq C_{d,s,\gamma}
 \cdot \big\|V_-\big\|^{\gamma+d/2s}_{L^{\gamma+d/2s}},
\end{equation}
where $V_-=\max\{0;-V\}$ and $C_{d,s,\gamma}$ is defined at \cite[formula (5.11)]{FrLiSe}.

In this paper, we obtain Lieb-Thirring type inequalities for the fractional Schr\"o\-din\-ger operator $H$ with complex-valued $V$.
These inequalities give information on the rate of convergence of points from the discrete spectrum $\si_d(H)$  to the essential spectrum of $H$.
The pertaining references on the subject are \cite{FrLaLiSe}, \cite{HuSi}, and \cite{DeHaKa1}.

We will assume a little more than $V$ being a relatively compact perturbation of $H_0$. Actually, we will suppose that $V$ is relatively Schatten-von Neumann perturbation of the fractional Laplacian.
Namely, let  $\Sg_p, p\geq 1$, be the Schatten-von Neumann class of compact operators, see Section \ref{s2.3} for further references on the subject.
Saying that the potential $V$ defined on $\R^d$ is a relatively Schatten-von Neumann perturbation of $H_0$ means that
$\mathrm{dom}(H_0) \subset \mathrm{dom}(V)$ and
\begin{equation}\label{hyp}
 V(\la-H_0)^{-1} \in \Sg_p,
\end{equation}
for one (and hence for all) $\la \in \C\backslash\si(H_0)$.
Hypothesis \eqref{hyp} is fulfilled provided $V\in L^p(\R^d)$ and $p > \max\{1;\frac{d}{2s}\}$ (see Proposition \ref{det-BS}).


We denote by $d(z, \Omega):=\ds\inf_{w\in \Omega}|z-w|$ the distance between $z \in \C$ and $\Omega \subset \C$. As usual,  $x_+=\max\{0;x\}$.
The main results of the present article are the following theorems.
The constants $\omega$ and $C_{\omega}$ are defined in \eqref{maj:opp}.
\begin{theorem}\label{T1}
Let $H$ be the fractional Schr\"{o}dinger operator defined by \eqref{def:H} with $0<s\leq \frac{d}{2}$
and $V \in L^p(\R^d)$ with $p>\frac{d}{2s}$.
Then, for $\tau>0$ small enough, we have
\begin{equation}\label{t11}
\ds\sum_{\la \in \si_d(H)} \dfrac{d(\la,\si(H_0))^{p+\tau}}
  {|\la|^{\al}(1+|\la|)^{\beta}}
\leq K \cdot \dfrac{C_{\omega}^p\, \omega^{\beta-\tau}}{\tau} \cdot \|V\|_{L^p}^p,
\end{equation}
where the powers are
\begin{enumerate}
\item $\al=\min\{\frac{p+\tau}{2}; \frac{d}{2s}\}$,
\item $\beta =2\tau+\frac{1}{2}(\frac{d}{s}-p-\tau)_+$.
\end{enumerate}
The constant $K$ depends on $d, p, s$, and $\tau$.
\end{theorem}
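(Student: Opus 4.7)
The plan is to adapt the complex-analysis approach of Borichev-Golinskii-Kupin \cite{BoGoKu} and Hansmann \cite{Ha1}. The first step is to encode $\si_d(H)$ as the zero set, counted with algebraic multiplicity, of a single holomorphic function on the resolvent set $\C \setminus \R^+$. Since $V(\la - H_0)^{-1}$ lies in $\Sg_p$ by hypothesis \eqref{hyp}, I form the regularised perturbation determinant
$$h(\la) := \det_{\lceil p \rceil}\bigl(\Id + V(\la - H_0)^{-1}\bigr),$$
which is holomorphic on $\C \setminus \R^+$, does not vanish identically, and whose zero set coincides with $\si_d(H)$ with the correct multiplicities.

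The second step is to derive a quantitative upper bound on $|h|$. The standard Schatten-class inequality for regularised determinants yields $\log |h(\la)| \leq C_p\, \|V(\la - H_0)^{-1}\|_{\Sg_p}^p$, and the right-hand side must be controlled by $\|V\|_{L^p}^p$ times an explicit function of $\la$ via a Kato--Seiler--Simon type estimate tailored to the fractional Laplacian (essentially Proposition~\ref{det-BS}). This bounding function must blow up in three distinct regimes: as $\la$ approaches $\R^+$ (a singularity of order $d(\la,\R^+)^{-p}$ coming from the resolvent), as $\la \to 0$, and as $|\la| \to \infty$. The latter two are governed by the short- and long-range behaviour of the resolvent kernel of $H_0$, and the thresholds $p \gtrless d/(2s)$ are exactly what produce the two cases appearing in $\al$ and $\beta$.

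The third step converts this growth estimate into a counting estimate. Conformally map $\C \setminus \R^+$ onto $\D$ by an explicit $\varphi$ sending $\R^+$ onto $\p \D$ and the boundary points $\la = 0$ and $\la = \infty$ onto two distinguished points $\xi_0, \xi_\infty \in \p \D$. The transplanted function $\tilde h = h \circ \varphi^{-1}$ then satisfies a growth bound of the model form
$$\log |\tilde h(w)| \leq C\,\|V\|_{L^p}^p\, (1-|w|)^{-p}\, |w-\xi_0|^{-a_0}\,|w-\xi_\infty|^{-a_\infty},$$
for suitable exponents $a_0, a_\infty$ dictated by the preceding step. The Borichev-Golinskii-Kupin theorem applied to $\tilde h$ yields, for each $\tau > 0$ small, a weighted summability of its zeros $\{w_k\}$ with an extra loss $\tau$ on each exponent and a $1/\tau$ factor in the constant. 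Pulling back through $\varphi$ and matching $(1-|w|)$, $|w-\xi_0|$, $|w-\xi_\infty|$ with $d(\la,\R^+)/(1+|\la|)^2$, $|\la|/(1+|\la|)$ and $1/(1+|\la|)$ respectively, delivers the weights in \eqref{t11} together with the stated values of $\al$ and $\beta$ and the constant $K\, C_\omega^p\, \omega^{\beta-\tau}/\tau$.

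The main obstacle is the second step: producing a single, globally valid Schatten bound on $V(\la - H_0)^{-1}$ whose $\la$-dependence captures the three boundary regimes simultaneously with the correct powers in every range of the parameters $s$, $d$, $p$. Once this estimate is in hand, the final application of the Borichev-Golinskii-Kupin machinery amounts to careful bookkeeping of the conformal change of variables and of the resulting exponents, with the parameter $\tau$ absorbing the unavoidable sub-logarithmic loss of the Jensen-type inequality underlying \cite{BoGoKu}.
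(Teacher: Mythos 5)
Your overall skeleton --- regularized perturbation determinant, Schatten/Kato--Seiler--Simon bound, conformal transplantation to $\D$, the Borichev--Golinskii--Kupin theorem, and pulling the zero-counting estimate back --- is exactly the paper's strategy, but three concrete ingredients are missing or wrong, and each one changes the exponents you end up with. First, your resolvent singularity is too weak: you claim a blow-up of order $d(\la,\R^+)^{-p}$, which is what the crude operator-norm bound $\|(\la-H_0)^{-1}\|=d(\la,\R^+)^{-1}$ would give, whereas the whole point of Proposition \ref{det-BR} is that the genuine $L^p$-computation gains one power, namely $\|(\la-|\cdot|^{2s})^{-1}\|_{L^p}^p\lesssim |\la|^{\frac{d}{2s}-1}\,d(\la,\si(H_0))^{-(p-1)}$ for $0<s\le \frac d2$. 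Feeding $d(\la,\R^+)^{-p}$ into Theorem \ref{BGK} produces $(1-|z|)^{p+1+\tau}$ rather than $(1-|z|)^{p+\tau}$, i.e.\ a numerator $d(\la,\si(H_0))^{p+1+\tau}$, strictly weaker than \eqref{t11}. Second, Theorem \ref{BGK} requires the transplanted function to equal $1$ at the origin, and your determinant ${\det}_{\lceil p\rceil}(\Id+V(\la-H_0)^{-1})$ has no reason to equal $1$ at $\varphi_a(0)=-a$; the paper builds the normalization into the symbol by taking $F(\la)=(\la+a)(a+H)^{-1}V(\la-H_0)^{-1}$, which vanishes at $\la=-a$, at the price of the bound \eqref{maj:opp} on $\|(-a-H)^{-1}\|$ --- this is precisely where $\omega$ and $C_\omega$ in the statement come from, and your write-up never produces them.

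Third, and most importantly, you treat the conformal map as a single fixed map, while the paper uses the one-parameter family $\varphi_a$, $a\ge\omega$, and \emph{integrates} the resulting zero-counting inequality in $a$ over $[\omega,+\infty[$ (the Demuth--Hansmann--Katriel averaging trick). For a fixed $a$ one only gets, e.g.\ in the regime $\frac{d}{2s}<p<\frac ds$, a denominator $(a+|\la|)^{\frac{d}{2s}+\frac p2+\frac{3\tau}{2}}$ as in \eqref{i1:case1}; the integration in $a$ trades the factor $|\omega-a|^{p}$ for a gain of essentially $(\omega+|\la|)^{p+1}$ and lowers the exponent to $\frac{d}{2s}-\frac p2+\frac{3\tau}{2}$, which is the stated $\beta$. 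Without this step your argument proves an inequality with $\beta$ larger by roughly $p$, again strictly weaker than Theorem \ref{T1}. (A smaller issue of the same kind: your dictionary $1-|w|\leftrightarrow d(\la,\R^+)/(1+|\la|)^2$ omits the $\sqrt{|\la|}$ present in the true distortion estimate \eqref{cm22}, and it is exactly that square root which generates the weight $|\la|^{\al}$ with $\al=\min\{\frac{p+\tau}{2};\frac{d}{2s}\}$.)
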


\begin{theorem}\label{T1b}
Let $H$ be the fractional Schr\"{o}dinger operator defined by \eqref{def:H} with $s>\frac{d}{2}$
and $V \in L^p(\R^d)$ with $p > 1$.
Then, for $\tau >0 $ small enough, we have
\begin{equation}\label{t12}
\ds\sum_{\la \in \si_d(H)}\dfrac{d(\la,\si(H_0))^{p+1-\frac{d}{2s}+\tau}}
{|\la|^{\al}(1+|\la|)^{\beta}}
\leq K' \cdot \dfrac{C_{\omega}^p\, \omega^{\beta-\tau}}{\tau} \cdot \|V\|_{L^p}^p,
\end{equation}
where the powers are
\begin{enumerate}
\item $\al=\frac{1}{2}+\frac{1}{2}\min\{p-\frac{d}{2s}+\tau;1\}$,
\item $\beta = 2\tau +\frac{1}{2}(\frac{d}{2s}-p+1-\tau)_+$.
\end{enumerate} 
The constant $K'$ depends on $d, p, s$, and $\tau$.
\end{theorem}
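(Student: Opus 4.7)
The plan is to follow the Borichev--Golinskii--Kupin / Hansmann program, suitably adapted to the regime $s>\frac{d}{2}$. The strategy has four stages: encode the discrete spectrum $\si_d(H)$ as the zero set of a suitable perturbation determinant; bound this determinant in terms of $\|V\|_{L^p}$; transport the problem onto the unit disc $\D$ via a conformal map; and finally apply the complex-analytic theorem of \cite{BoGoKu} which converts a majorization of a holomorphic function on $\D$ into a weighted summability estimate on its zeros.

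First, since Proposition \ref{det-BS} gives $V(H_0-\la)^{-1}\in \Sg_p$ for $p>1$ (which is compatible with the present hypothesis because $\frac{d}{2s}<1$ when $s>\frac{d}{2}$), I set $m=\lceil p\rceil$ and introduce the regularized perturbation determinant
\[
h(\la):=\det\nolimits_{m}\!\bigl(I+V(H_0-\la)^{-1}\bigr),\qquad \la\in\C\setminus\R^+.
\]
This function is holomorphic on the resolvent set of $H_0$ and its zeros, counted with algebraic multiplicity, coincide with $\si_d(H)$. Combining the standard bound $|\det_{m}(I+A)|\le \exp(c_m\|A\|_{\Sg_m}^m)$ with the Schatten estimate supplied by Proposition \ref{det-BS}, one obtains an inequality of the shape
\[
\log|h(\la)|\;\le\; K\,\|V\|_{L^p}^p\cdot \frac{(1+|\la|)^{B}}{|\la|^{A}\,d(\la,\R^+)^{C}},
\]
where $A,B,C$ depend explicitly on $d,s,p$. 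The crucial observation is that for $s>\frac{d}{2}$ the resolvent kernel of $H_0$ is better behaved: the power of $d(\la,\R^+)^{-1}$ gains a factor of $1-\frac{d}{2s}$ compared with the case $s\le\frac{d}{2}$, which is precisely the source of the shift from the exponent $p+\tau$ in Theorem \ref{T1} to $p+1-\frac{d}{2s}+\tau$ in Theorem \ref{T1b}.

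Next, I transport the problem onto $\D$ by the conformal map $\la=\omega\bigl(\tfrac{1-z}{1+z}\bigr)^2$, with $\omega$ the scaling parameter from \eqref{maj:opp}, and pull $h$ back to a function $\tilde h$ holomorphic on $\D$. Under this change of variable the geometric quantities transform in the standard way: $d(\la,\R^+)$ becomes comparable to $|\la|^{1/2}(1-|z|)$, while $|\la|$ and $(1+|\la|)$ become comparable to negative powers of $|1+z|$ and $|1-z|$ respectively (the two boundary points that are the images of $\la=0$ and $\la=\infty$). The majorant of $\log|h|$ above thus pulls back to a majorant of $\log|\tilde h|$ by negative powers of $(1-|z|)$ and of the boundary distances to $z=\pm 1$, which is exactly the kind of input required by the BGK theorem.

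The final step is to feed this majorization into the complex-analytic theorem of \cite{BoGoKu}: it produces, with the characteristic small loss $+\tau$, a weighted sum over the zeros $z_n$ of $\tilde h$ involving $(1-|z_n|)^{C+1+\tau}$ and prescribed powers of $|1\pm z_n|$. Unwinding the substitution yields the left-hand side of \eqref{t12}. The main obstacle, as always in this circle of ideas, will not be any single step in isolation but the bookkeeping: tracking the exponents $A,B,C$ coming out of the Schatten estimate in the regime $s>\frac{d}{2}$ and matching them through the conformal substitution so that the final weight reproduces exactly $\al=\tfrac{1}{2}+\tfrac{1}{2}\min\{p-\frac{d}{2s}+\tau;1\}$ and $\beta=2\tau+\tfrac{1}{2}(\tfrac{d}{2s}-p+1-\tau)_+$. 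In particular, the $\min$ and the positive part $(\cdot)_+$ in the exponents will correspond transparently to whether the Schatten bound or the BGK input is the binding constraint in each subrange of $p$.
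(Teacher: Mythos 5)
Your overall architecture (perturbation determinant, Schatten bound, conformal transfer, BGK) matches the paper's, and you correctly identify that for $s>\frac{d}{2}$ the resolvent estimate loses only $d(\la,\R^+)^{-(p-\frac{d}{2s})}$, which is the source of the exponent $p+1-\frac{d}{2s}+\tau$. But there are two genuine gaps. First, and most seriously, you omit the integration over the auxiliary parameter $a$. The paper does not work with a single conformal map: it uses the family $\varphi_a(z)=-a\bigl(\frac{z+1}{z-1}\bigr)^2$, obtains from BGK an inequality whose right-hand side carries a factor $a^{\frac{d}{2s}}/|\omega-a|^p$ and whose left-hand side carries a denominator $(a+|\la|)^{\gamma}$ with $\gamma$ of order $\frac{3}{2}(p+\tau)-\frac{3d}{4s}+\frac{1}{2}$ (see \eqref{i1:case4}--\eqref{i1:case6}), and then integrates in $a$ over $[\omega,+\infty[$ (the Demuth--Hansmann--Katriel averaging). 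It is only this integration that collapses the large power of $(a+|\la|)$ down to $(\omega+|\la|)^{\beta}$ with $\beta=2\tau+\frac{1}{2}(\frac{d}{2s}-p+1-\tau)_+$ and produces the factors $\omega^{\beta-\tau}/\tau$ in \eqref{t12}. ``Unwinding the substitution'' for one fixed map yields a strictly weaker inequality with a much larger power of $(1+|\la|)$; the stated $\al$ and $\beta$ are not reachable that way.

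Second, your determinant $h(\la)=\det_m(I+V(H_0-\la)^{-1})$ does not satisfy the normalization $\tilde h(0)=1$ required by Theorem \ref{BGK}, since $h(\varphi_a(0))=h(-a)\neq 1$ in general. The paper instead uses $F(\la)=(\la+a)(a+H)^{-1}V(\la-H_0)^{-1}$, engineered so that $F(-a)=0$ and hence $g(0)=f(-a)=1$; the price is the operator bound \eqref{maj:opp} on $\|(-a-H)^{-1}\|$, which is exactly where the constants $\omega$ and $C_\omega$ appearing in the statement of the theorem come from --- your sketch has no mechanism to produce them. Two smaller points: your map $\la=\omega\bigl(\frac{1-z}{1+z}\bigr)^2$ sends $\D$ onto $\C\setminus\R^-$ rather than onto the resolvent set $\C\setminus\R^+$ (a sign is missing), and the distortion claim $d(\la,\R^+)\approx|\la|^{1/2}(1-|z|)$ drops the factor $(a+|\la|)$ present in \eqref{cm22}, which is precisely the factor whose bookkeeping (together with the $a$-integration) determines $\beta$. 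Finally, the bound \eqref{BR1} for $s>\frac{d}{2}$ is not routine --- the argument for $s\le\frac{d}{2}$ breaks down because $\delta=\frac{d}{2s}-1<0$ and a separate substitution is needed --- so asserting it without proof leaves out one of the paper's main technical steps.
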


The above theorems essentially rely on complex analysis methods presented in \cite{BoGoKu}, while Theorem \ref{T2} is based on results of \cite{Ha1}, obtained with the help of tools of functional analysis and operator theory.

\begin{theorem}\label{T2}
Let $H$ be the fractional Schr\"{o}dinger operator
defined in \eqref{def:H} with $s>0$ and $V\in L^p(\R^d)$,
with $p>\max\{1;\frac{d}{2s}\}$.
Then, for $\tau>0$, the following inequality holds
\begin{equation}\label{t2}
 \ds\sum_{\la \in \si_d(H)} \dfrac{d(\la,\si(H_0))^{p}}
{(1+|\la|)^{\frac{d}{2s}+\tau}}
\leq K'' \cdot \dfrac{C_{\omega}^p\, \omega^{\frac{d}{2s}}}{\tau}  \cdot \|V\|^p_{L^p},
\end{equation}
with $K''$ depending on $d, p, s$, and $\tau$.
\end{theorem}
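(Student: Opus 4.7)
My strategy is to invoke the Hansmann perturbation inequality from \cite{Ha1}, which states that for a bounded normal operator $Z_0$ and an operator $Z$ with $Z - Z_0 \in \Sg_p$,
\[
\sum_{\mu \in \si_d(Z)} d\bigl(\mu, \si(Z_0)\bigr)^p \leq C_p \, \|Z - Z_0\|_{\Sg_p}^p .
\]
Since the perturbation $V$ is not itself in $\Sg_p$, the inequality must be applied to the bounded resolvents at a well-chosen base point. I would take $z_0 = -\omega$ (with $\omega$ the positive parameter of \eqref{maj:opp}) and set $Z_0 := (z_0 - H_0)^{-1}$ and $Z := (z_0 - H)^{-1}$. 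Then $Z_0$ is normal, its spectrum is the segment $[-1/\omega, 0]\subset\R$, and each $\la \in \si_d(H)$ corresponds to the eigenvalue $\mu_\la := -(\omega+\la)^{-1}$ of $Z$.

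The first step is to control $\|Z - Z_0\|_{\Sg_p}$. The second resolvent identity $Z - Z_0 = Z V Z_0$ yields $\|Z - Z_0\|_{\Sg_p} \leq \|Z\|\cdot\|V Z_0\|_{\Sg_p}$. The Birman--Solomyak-type bound of Proposition \ref{det-BS} gives $\|V Z_0\|_{\Sg_p} \leq C_\omega \|V\|_{L^p}$, and choosing $\omega$ large enough so that this Schatten norm is smaller than $1/2$ forces, via a Neumann-series argument, $\|Z\| \lesssim \omega^{-1}$. A direct computation of the conformal distortion of the M\"obius map $z \mapsto -(\omega+z)^{-1}$ then gives $d(\mu_\la, \si(Z_0)) \asymp d(\la, \si(H_0))/|\omega + \la|^2$ (with the cases $\rrm \la \geq 0$ and $\rrm \la < 0$ treated separately). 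Substituting into Hansmann's inequality produces the intermediate estimate
\[
\sum_{\la \in \si_d(H)} \frac{d(\la, \si(H_0))^p}{|\omega+\la|^{2p}} \leq K \cdot C_\omega^p \cdot \omega^{-p} \cdot \|V\|_{L^p}^p.
\]

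The last step, which I expect to be the main obstacle, is the passage from the weight $|\omega+\la|^{2p}$, natural in view of the conformal map, to the weaker target weight $(1+|\la|)^{d/(2s)+\tau}$ whose exponent is strictly smaller (for $p > d/(2s)$). I would perform this via a dyadic-scaling argument: partitioning $\si_d(H)$ into annuli $\{|\la| \sim 2^k\}$, applying the preceding inequality with the base point $\omega \asymp 2^k$ adjusted to each annulus, and summing the resulting geometric series $\sum_k 2^{-k\tau}$. The latter is of order $1/\tau$, which accounts simultaneously for the requirement $\tau > 0$ and the blow-up $1/\tau$ in the constant of \eqref{t2}; the $\omega^{d/(2s)}$ factor comes from aggregating the scaling $\omega^{d/(2s)-p}$ of $\|VZ_0\|_{\Sg_p}^p$ over the decomposition.
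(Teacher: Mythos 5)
Your proposal is correct and, for the bulk of the argument, coincides with the paper's proof: Hansmann's inequality (Theorem \ref{TH}) applied to the resolvents $A=(-a-H_0)^{-1}$ and $B=(-a-H)^{-1}$, the second resolvent identity $B-A=BVA$, the Birman--Solomyak bound of Proposition \ref{det-BS} combined with the Neumann-series estimate \eqref{maj:opp} for $\|B\|$, and the distortion bound of Proposition \ref{dist} for the M\"obius map $\la\mapsto-(a+\la)^{-1}$, leading to the same intermediate estimate \eqref{i:Lf}. The one genuine divergence is the final averaging step. The paper keeps a single family of base points $-a$, $a\in[\omega,+\infty[$, multiplies \eqref{i:Lf} by the weight $a^{p-\frac{d}{2s}-1-\tau}|\omega-a|^{p}$ and integrates in $a$ (the Demuth--Hansmann--Katriel trick), the factor $\int_\omega^\infty a^{-1-\tau}\,da=1/(\tau\omega^\tau)$ producing the $1/\tau$ blow-up; you instead discretize this average dyadically, applying the estimate with $a\asymp 2^{k}$ on the annulus $|\la|\sim 2^{k}$ and summing $\sum_k 2^{-k\tau}\asymp 1/\tau$. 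The two are equivalent in substance and yield the same weight $(1+|\la|)^{\frac{d}{2s}+\tau}$ and the same constants; the continuous version gives slightly cleaner explicit constants, while yours is perhaps more transparent about where the loss $\tau$ occurs. Two small points to tighten in your write-up: (i) your displayed intermediate estimate carries only $\omega^{-p}$ on the right, whereas the correct scaling is $\omega^{\frac{d}{2s}-2p}$ (the factor $\omega^{\frac{d}{2s}-p}$ from $\|VZ_0\|_{\Sg_p}^{p}$, which you do restore in the final paragraph, is what ultimately produces $\omega^{\frac{d}{2s}}$ in \eqref{t2}); (ii) the dyadic scheme only runs over scales $2^{k}\gtrsim\omega$, since the base point must stay in the region where \eqref{maj:opp} holds, so the eigenvalues with $|\la|\lesssim\omega$ must be absorbed into the bottom annulus with the fixed base point $-\omega$ -- this works because there $(1+|\la|)^{\frac{d}{2s}+\tau}\gtrsim 1$ and $(\omega+|\la|)^{2p}\asymp\omega^{2p}$, and it also requires the uniformity $C_{a}\le C_{\omega}$ for $a\ge\omega$, which follows from the decay of $\|V(-a-H_0)^{-1}\|_{\Sg_p}$ in $a$ since $p>\frac{d}{2s}$. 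Neither point is a gap in the idea, only in the bookkeeping.
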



Let us assume that $V$ is real-valued and $V \in L^p(\R^d)$ for $p > \max\{1; \frac{d}{2s}\}$.
Then $H=(-\Delta)^s+V$ is a self-adjoint operator
and $\si_d(H)$ lies on the negative real half-axis.

In \cite{FrLiSe}, the values of the parameter $s$ are restricted to the range
$0<s < \min\{1, \frac{d}{2}\}$ due to the presence of the magnetic potential (see \cite[Section 2.1]{FrLiSe}).
Setting $\gamma=p-\frac{d}{2s}>0$, \eqref{i:FrLiSe1} becomes
\begin{equation}\label{i:FrLiSe}
\sum_{\la\in \si_d(H)} |\la|^{p-\frac{d}{2s}}\leq C_{p,d,s} \, \|V_-\|^p_{L^p},
\end{equation}
where, as always, $V_-=\min\{V, 0\}$.

Theorems \ref{T1} and \ref{T1b} give Lieb-Thirring type inequalities for all positive values of $s$.
In particular, for $0< s \leq \frac{d}{2}$, \eqref{t11} becomes
\begin{equation}\label{LT-sa}
\ds\sum_{\la \in \si_d(H)} |\la|^{\max\{\frac{p+\tau}{2}; p-\frac{d}{2s}+\tau\}} \leq C_{d,p,s, \omega} \| V\|^p_{L^p}.
\end{equation}
Rather expectedly,  we see that \eqref{LT-sa} is slightly weaker than \eqref{i:FrLiSe}, but our results apply to a considerably larger class of potentials.


We continue with few words on the notation.
The generic constants will be denoted  by $C$, that is, they will be allowed to change from one relation to another.
For two positive functions $f, g$ defined on a
domain $\Omega$ of the complex plane $\C$, we write $f(\la) \ap
g(\la)$ if there are constants $C_1, C_2 >0 $ such that $C_1
f(\la)\leq g(\la)\leq C_2 f(\la)$, for all $\la\in \Omega$.
We write $f(\la) \lesssim g(\la)$ ( $f(\la) \gtrsim g(\la)$) if there is a positive constant $C$ such that
$f(\la)\leq C\cdot g(\la)$ ($f(\la)\geq C\cdot g(\la)$, respectively) for $\la \in \Omega$.
The choice of the domain $\Omega$ will be clear from the context.


To compare Theorem \ref{T1} and Theorem \ref{T2},
it is convenient to consider a sequence $(\la_n) \subset \si_d(H)$ which converges to $\la \in \si_{\ess}(H)$.
Recall that $\si_{\ess}(H)=\si(H_0)$.
We give details on the comparison between \eqref{t11} and \eqref{t2},
the comparison between \eqref{t12} and \eqref{t2} being similar.
Without the loss of generality, we assume $d(\la_n,\si(H_0))\leq 1$.

In the case  $\la\in ]0; +\infty[$,  \eqref{t2} is better than \eqref{t11}.

In the case $\la=\infty$,
the term in \eqref{t11} becomes
$\dfrac{d(\la_n,\si(H_0))^{p+\tau}}{|\la_n|^{\frac{d}{2s}+2\tau}}$, for $n$ large enough,
and \eqref{t2} is again better than \eqref{t11}.

When $\la=0$, the situation is slightly more complicated.
Choose $\tau>0$ small enough to guarantee $\alpha/\tau\geq 1$. Then, \eqref{t11} is better than \eqref{t2} provided $\mathrm{Re}(\la_n)\leq 0$ \ or \  $|\mathrm{Im}(\la_n)|^\tau \gtrsim |\mathrm{Re}(\la_n)|^\alpha$ \  for \ $\mathrm{Re}(\la_n)> 0$. Inequality \eqref{t2} is better than \eqref{t11} in the opposite case, {\it i.e.,} when $\mathrm{Re}(\la_n)> 0$ \ and \ $|\mathrm{Im}(\la_n)|^\tau \lesssim |\mathrm{Re}(\la_n)|^\alpha$.

To sum up,  we see that neither Theorem \ref{T1} nor Theorem \ref{T2} take the advantage over each other.


As concluding remark, we would like to mention that it is possible to consider complex matrix-valued potential $V$ as in \cite{Du}, devoted to the study of non-self-adjoint Dirac operators.  Unlike the latter paper, the study of matrix-valued potentials is neither natural nor complicated in the present framework.
The only difference with the scalar-valued case will be the presence of the constant $n^{p/2}$ in \eqref{BR} and \eqref{BR1},
$n$ being the size of the square matrix giving the matrix-valued potential.

At last, we say few words on the structure of the paper.
We recall some known results and give references in Section \ref{s-prelim}.
The key point of the proofs is the bound on the resolvent of $H_0$, and it is proved in Section \ref{s-BS}.
In Sections \ref{s-proof1} and \ref{s-proof1b} we prove Theorems \ref{T1} and \ref{T1b}, respectively.
In Section \ref{s-proofH} we deal with Theorem \ref{T2}.

\medskip

\noindent\textbf{Acknowledgment}: 
I thank Stanislas Kupin for his helpful comments on the subject.


\section{Preliminaries.}\label{s-prelim}

\subsection{Theorem of Borichev-Golinskii-Kupin.}
The following theorem, proved in \cite[Theorem 0.2]{BoGoKu}, gives a bound on the distribution of zeros of a holomorphic function on the unit disc $\D:=\{|z|<1\}$ in terms of its growth towards the boundary $\T:=\{|z|=1\}$.

\begin{theorem}\label{BGK}
Let $h$ be a holomorphic function on $\D$ with $h(0)=1$.
Assume that $h$ satisfies a bound of the form
\begin{equation}\label{hyp:BGK}
 |h(z)|\leq\exp\left(
\dfrac{K}{(1-|z|)^{\al}} \ds\prod_{j=1}^N\dfrac{1}{|z-\zeta_j|^{\beta_j}}
\right),
\end{equation}
where $|\zeta_j|=1$ and $\al, \beta_j\geq 0, \ j=1,\dots, N$.

Then, for any $0<\tau<1$, the zeros of $h$ satisfy the inequality
\[ \ds\sum_{h(z)=0}(1-|z|)^{\al+1+\tau}
\ds\prod_{j=1}^N|z-\zeta_j|^{(\beta_j-1+\tau)_+}\leq C\cdot K,
\]
where $C$ depends on $\al,\beta_j,\zeta_j$ and $\tau$.
\end{theorem}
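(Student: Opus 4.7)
The plan is to derive a weighted Blaschke-type condition. The starting point is the Poisson-Jensen identity
\[
\ds\sum_{h(z_k)=0,\,|z_k|<r}\log\frac{r}{|z_k|}=\frac{1}{2\pi}\int_0^{2\pi}\log|h(re^{i\theta})|\,d\theta,
\]
valid for $h$ holomorphic on $\D$ with $h(0)=1$. If $h$ were in $H^\infty(\D)$, the right-hand side would be uniformly bounded in $r<1$, giving the classical condition $\sum(1-|z_k|)<\infty$. Here \eqref{hyp:BGK} permits blow-up both as $r\to 1^-$ and as $re^{i\theta}\to \zeta_j$, so I would compensate by multiplying the identity by a carefully chosen nonnegative test weight $\rho(r,\theta)$ and integrating in $(r,\theta)$.

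The weight $\rho$ is dictated by matching the exponents in \eqref{hyp:BGK}. Schematically, I would take $\rho(r,\theta)$ proportional to $(1-r)^{\alpha+\tau-1}$ times an angular factor built from the distances $|re^{i\theta}-\zeta_j|$, chosen large enough to cancel the $-\beta_j$ exponent in \eqref{hyp:BGK} but small enough to remain integrable over $[0,2\pi]$. The free parameter $\tau>0$ provides the slack that renders both the radial integral $\int(1-r)^{\tau-1}dr=1/\tau$ and the angular integrals convergent. Substituting \eqref{hyp:BGK} into the integrated identity then yields
\[
\int_0^1\int_0^{2\pi}\rho(r,\theta)\,\log|h(re^{i\theta})|\,d\theta\,dr\;\lesssim\;\frac{K}{\tau}.
\]

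On the other side, Fubini produces $\sum_k J_k$ with
\[
J_k=\int_{|z_k|}^1\int_0^{2\pi}\rho(r,\theta)\log(r/|z_k|)\,d\theta\,dr.
\]
The elementary bound $\log(r/|z_k|)\gtrsim 1-|z_k|$ for $r$ close to $1$ gives a radial factor of order $(1-|z_k|)^{\alpha+1+\tau}$ after integration. A separate angular computation, localized to the arc of $\T$ closest to $z_k$, yields a lower bound $\gtrsim|z_k-\zeta_j|^{(\beta_j-1+\tau)_+}$ for each $j$; the positive-part truncation $(\beta_j-1+\tau)_+$ arises naturally because when $\beta_j<1-\tau$ the growth of \eqref{hyp:BGK} near $\zeta_j$ is already integrable, no compensating weight is needed at $\zeta_j$, and hence no extra vanishing can be forced on the zeros there.

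The main obstacle is the potential-theoretic lower bound on the angular part of $J_k$, which must hold uniformly in the position of $z_k$, including when $z_k$ accumulates near one of the $\zeta_j$. A careful case analysis, localizing $z_k$ with respect to the arcs around each $\zeta_j$ and separating the contributions of the distinct singular points, is required to obtain the clean product lower bound. This lower estimate is the technical heart of the argument of \cite{BoGoKu}; once it is in hand, combining it with the upper bound from the previous step and passing $r\to 1^-$ by monotone convergence delivers the theorem with a constant of the form $C/\tau$ depending only on $\alpha$, the $\beta_j$, and the mutual separation of the $\zeta_j$.
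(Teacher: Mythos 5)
First, a structural remark: the paper does not prove Theorem \ref{BGK} at all --- it is imported verbatim as Theorem 0.2 of \cite{BoGoKu} and used as a black box. Your proposal is therefore being measured against the original BGK argument, and against that standard it has a genuine gap: the step you yourself identify as ``the technical heart'' (the uniform lower bound producing the factors $|z_k-\zeta_j|^{(\beta_j-1+\tau)_+}$) is not proved but explicitly deferred to \cite{BoGoKu}. Since that step is the entire content of the theorem beyond the classical Blaschke condition, the proposal is an outline of a strategy rather than a proof.

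More importantly, the strategy as stated cannot be completed. Your starting identity is Jensen's formula, $\sum_{|z_k|<r}\log(r/|z_k|)=\frac{1}{2\pi}\int_0^{2\pi}\log|h(re^{i\theta})|\,d\theta$, in which the angular variable has already been integrated out: the left-hand side depends only on the moduli $|z_k|$. Multiplying by any weight $\rho(r,\theta)$ and integrating therefore yields, for each zero, a quantity $J_k$ that is a function of $|z_k|$ alone; no ``angular computation localized to the arc closest to $z_k$'' can extract a factor depending on $\arg z_k$, so the product $\prod_j|z_k-\zeta_j|^{(\beta_j-1+\tau)_+}$ cannot appear on the zero side. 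Worse, if the scheme did go through with $\rho(r,\theta)=(1-r)^{\alpha+\tau-1}\prod_j|re^{i\theta}-\zeta_j|^{(\beta_j-1+\tau)_+}$, the angular integral of $\rho$ is bounded below by a positive constant, and you would deduce $\sum_k(1-|z_k|)^{\alpha+1+\tau}\lesssim K/\tau$ with no angular factors --- a strengthening of the theorem that is false (the sharpness examples in \cite{BoGoKu} exhibit functions satisfying \eqref{hyp:BGK} with $\beta_1>1$ whose zeros accumulate at $\zeta_1$ so fast that this sum diverges). To see the positions of the zeros one must work with the Riesz measure $\Delta\log|h|=2\pi\sum_k\delta_{z_k}$ tested against a two-dimensional weight via Green's formula (the route of later work by Favorov--Golinskii), or, as in \cite{BoGoKu} itself, decompose a neighbourhood of $\T$ into dyadic Carleson-type regions accumulating at the $\zeta_j$, apply a local Jensen-type zero-counting lemma on each region after a conformal change of variable, and sum. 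Either repair replaces your central mechanism, so the proposal needs to be rebuilt around one of these, not merely supplemented.
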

Above, $x_+=\max\{x,0\}$.


\subsection{Conformal maps.}

Let $\varphi_a$ be a conformal map sending $\D$ to the resolvent set of the operator $H_0$, $\rho(H_0)=\C\bsl\R^+$. For $a>0$, it is given by the relation
\begin{equation}\label{phi}
\varphi_a: z \mapsto \la:=-a\left(\dfrac{z+1}{z-1} \right)^2,
\end{equation}
and the inverse map going from $\C\bsl \R^+$ to $\D$ is
\begin{equation*}
\varphi_a^{-1}:\la \mapsto z:=\dfrac{\sqrt{\la}-\mathrm{i}\sqrt{a}}{\sqrt{\la}+\mathrm{i}\sqrt{a}}.
\end{equation*}

Later in the paper, we will have to compare the distance from $\la=\varphi_a(z)$
to the boundary of $\rho(H_0), \  \partial \rho(H_0)=\R^+$,
and the distance from $z$ to $\partial \D=\T$.
The results of this kind are called distortion theorems.

\begin{proposition}[Distortion between $\C\bsl \R^+$ and $\D$]\label{CM:2}
Let, as above,  $\la=\varphi_a(z)$. We have
\begin{equation}\label{cm21}
a \cdot d(z,\T) \cdot \dfrac{|z+1|}{|z-1|^3} \leq d(\la,\R^+) 
  \leq 8a \cdot d(z,\T)\cdot\dfrac{|z+1|}{|z-1|^3},
\end{equation}
and
\begin{equation}\label{cm22}
\dfrac{\sqrt{a}}{4}\cdot \dfrac{d(\la,\R^+)}{\sqrt{|\la|}(a+|\la|)} 
\leq d(z,\T)\leq 4\sqrt{a}\cdot \dfrac{d(\la,\R^+)}{\sqrt{|\la|}(a+|\la|)}.
\end{equation}
\end{proposition}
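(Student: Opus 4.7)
The plan is to reduce the proposition to the Koebe distortion theorem applied to the conformal map $\varphi_a\colon \D \to \rho(H_0) = \C\bsl\R^+$, together with an explicit computation of $|\varphi_a'|$.

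I would start by differentiating $\varphi_a(z) = -a\bigl(\tfrac{z+1}{z-1}\bigr)^2$ directly, which gives $\varphi_a'(z) = \tfrac{4a(z+1)}{(z-1)^3}$ and hence $|\varphi_a'(z)| = \tfrac{4a|z+1|}{|z-1|^3}$. Since $\varphi_a$ is a univalent map of $\D$ onto $\rho(H_0)$, the standard Koebe distortion estimate (lower bound from the Koebe $\tfrac14$-theorem, upper bound from the Schwarz lemma applied to $\varphi_a^{-1}$) yields
\begin{equation*}
\tfrac14(1-|z|^2)\,|\varphi_a'(z)| \;\leq\; d(\varphi_a(z),\R^+) \;\leq\; (1-|z|^2)\,|\varphi_a'(z)|.
\end{equation*}
Using $d(z,\T)=1-|z|$ and $1-|z|\leq 1-|z|^2 \leq 2\,d(z,\T)$, and substituting the value of $|\varphi_a'(z)|$, this produces \eqref{cm21} with the stated constants $a$ and $8a$.

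For \eqref{cm22} I would invert the change of variable using the explicit formula $z = \varphi_a^{-1}(\la) = \tfrac{\sqrt{\la}-i\sqrt{a}}{\sqrt{\la}+i\sqrt{a}}$. A direct computation gives
\begin{equation*}
|z-1| \;=\; \frac{2\sqrt{a}}{|\sqrt{\la}+i\sqrt{a}|}, \qquad |z+1| \;=\; \frac{2\sqrt{|\la|}}{|\sqrt{\la}+i\sqrt{a}|},
\end{equation*}
so that $a\cdot \tfrac{|z+1|}{|z-1|^3} = \tfrac{\sqrt{|\la|}}{4\sqrt{a}}\,|\sqrt{\la}+i\sqrt{a}|^2$. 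Fixing the branch of the square root with $\mathrm{Im}\sqrt{\la}\geq 0$ on $\C\bsl\R^+$, one expands $|\sqrt{\la}+i\sqrt{a}|^2 = |\la|+a+2\sqrt{a}\,\mathrm{Im}\sqrt{\la}$, which, using $\mathrm{Im}\sqrt{\la}\leq \sqrt{|\la|}$ and $2\sqrt{a|\la|}\leq |\la|+a$, gives the two-sided bound
\begin{equation*}
|\la|+a \;\leq\; |\sqrt{\la}+i\sqrt{a}|^2 \;\leq\; 2(|\la|+a).
\end{equation*}
Inserting these into \eqref{cm21} and solving for $d(z,\T)$ yields \eqref{cm22} with the constants $\tfrac{\sqrt{a}}{4}$ and $4\sqrt{a}$.

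The only non-elementary ingredient is the Koebe distortion estimate; everything else is routine algebra with Möbius and square maps. The main thing to watch is the consistency of the numerical constants and fixing one branch of $\sqrt{\cdot}$ once and for all so that $\mathrm{Im}\sqrt{\la}\geq 0$ on $\C\bsl\R^+$, which is what makes the lower bound $|\la|+a \leq |\sqrt{\la}+i\sqrt{a}|^2$ hold.
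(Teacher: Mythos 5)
Your proof is correct and follows essentially the same route as the paper: inequality \eqref{cm21} is the Koebe distortion estimate applied to $\varphi_a$ (you additionally compute $\varphi_a'$ explicitly, which the paper omits), and \eqref{cm22} is obtained by the same algebra with $|z\pm1|$ expressed through $\sqrt{\la}$ and the two-sided bound $|\la|+a\leq|\sqrt{\la}+\mathrm{i}\sqrt{a}|^2\leq 2(|\la|+a)$. The constants all check out.
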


\begin{proof}
The first inequality is a direct application of Koebe distortion Theorem \cite[Corollary 1.4]{Po} to the map $\varphi_a$, so the proof is omitted.

For the second one, we have
\begin{equation}\label{cm}
|z+1| = \dfrac{2\sqrt{|\la|}}{|\sqrt{\la}+\mathrm{i}\sqrt{a}|},
\qquad 
|z-1| = \dfrac{2\sqrt{a}}{|\sqrt{\la}+\mathrm{i}\sqrt{a}|}.
\end{equation}
On the other hand,
$|\sqrt{\la}+\mathrm{i}\sqrt{a}|^2=|\la|+a+2\sqrt{a}\,\mathrm{Im}(\sqrt{\la})$,
and, since $\mathrm{Im}(\sqrt{\la}) \geq 0$, we obtain
\begin{equation*}
|\la|+a \leq |\sqrt{\la}+\mathrm{i}\sqrt{a}|^2\leq
 \left(|\sqrt{\la}|+|\sqrt{a}|\right)^2 \leq 2(a+|\la|).
\end{equation*}
Going back to inequalities \eqref{cm21}, we get \eqref{cm22}.
\end{proof}


\subsection{Schatten classes and determinants.}\label{s2.3}

One can find the definitions and properties of Schatten classes and regularized determinants related to these classes in \cite{DeHaKa} or \cite{Du}. 
For detailed discussion and proofs, see the monographs by Gohberg-Krein \cite{GoKr} and Simon \cite{Si1}.

Let us consider the following operator 
\begin{equation}\label{def:F}
F(\la):=(\la+a)(a + H)^{-1}V(\la-H_0)^{-1},
\end{equation}
where $a$ is large enough to guarantee that $(a+H)$ is invertible.
The coming Proposition \ref{det-BS} implies that
$V(\la-H_0)^{-1}\in \Sg_p$
for $\la \in \rho(H_0)$, provided $V\in L^p(\R^d)$ and $p>\max\{1;\frac{d}{2s}\}$.

For $\la \in \rho(H_0)$, we have $F(\la)\in \Sg_p$ and
$F$ holomorphic in $\rho(H_0)$, therefore
the holomorphic function of interest is, for all $\la \in \rho(H_0)$,
\begin{equation}\label{def:f}
f(\la):={\det}_{\lceil p \rceil}(\Id-F(\la)),
\end{equation}
where $\lceil p \rceil = \min\{n \in \N, n\geq p \}$.

In particular, the zeros of $f$ are the eigenvalues of $H$ (counted with algebraic multiplicities),
and, for $A \in \Sg_p$, we have
\begin{equation}\label{i-det}
|{\det}_{\lceil p \rceil}(\Id - A)| \leq \exp\left( \Gamma_p \|A\|^p_{\Sg_p}\right).
\end{equation}

We also use the well known inequality from \cite[Theorem 4.1]{Si2},
which we call Birman-Solomyak inequality; some authors prefer to call it Kato-Seiler-Simon inequality.
Observe that this inequality holds true for $1<p\leq 2$ by duality of the case $p\geq 2$.

\begin{proposition}\label{det-BS}
Let $V\in L^p(\R^d)$ complex-valued with $p > \max\{1;\frac{d}{2s}\}$, and $s>0$.
Assume that $\la\in\rho(H_0)$.

Then $V(\la-H_0)^{-1}\in \Sg_p$, and
\[
\|V(\la-H_0)^{-1}\|_{\Sg_p}^p  \leq
(2\pi)^{-d}\|V\|_{L^p}^p\cdot\|(\la-|\cdot |^{2s})^{-1}\|_{L^p}^p.
\]
\end{proposition}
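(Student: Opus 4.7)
The strategy is a direct reduction to the Birman--Solomyak (also called Kato--Seiler--Simon) inequality cited from \cite[Theorem 4.1]{Si2}. Since $H_0=(-\Delta)^s$ is a Fourier multiplier, namely multiplication by $|\xi|^{2s}$ after conjugation by the Fourier transform, the resolvent $(\la-H_0)^{-1}$ is itself the Fourier multiplier associated with the symbol
\[
g_\la(\xi):=(\la-|\xi|^{2s})^{-1}.
\]
Writing $M_V$ for the operator of multiplication by $V$, we have $V(\la-H_0)^{-1}=M_V\,g_\la(-\mathrm{i}\nabla)$, which is exactly an operator of the type to which Birman--Solomyak applies. The inequality then yields
\[
\|V(\la-H_0)^{-1}\|_{\Sg_p}\leq (2\pi)^{-d/p}\,\|V\|_{L^p}\,\|g_\la\|_{L^p},
\]
and raising this to the $p$-th power produces the desired bound.

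It remains to check that $g_\la$ really does lie in $L^p(\R^d)$ under the assumption $p>\max\{1;d/(2s)\}$. Passing to polar coordinates and substituting $u=r^{2s}$ reduces the integrability question to
\[
\int_0^{+\infty}|\la-u|^{-p}\,u^{d/(2s)-1}\,du.
\]
Since $\la\in\rho(H_0)=\C\bsl\R^+$, one has $|\la-u|\geq d(\la,\R^+)>0$ for all $u\geq 0$, so the integrand is locally bounded on compact subsets of $[0,+\infty)$ and the singularity at $u=0$ is integrable because $d/(2s)-1>-1$. At infinity, the integrand decays like $u^{d/(2s)-1-p}$, and integrability requires $p>d/(2s)$, which is precisely the standing hypothesis.

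The only delicate point is the regime $1<p<2$, where the Birman--Solomyak inequality is not the original Hilbert--Schmidt/$\Sg_p$ for $p\ge 2$ statement but must be recovered by a duality argument; the author has flagged this just before the statement, so one may simply invoke it. I do not see any other obstacle: once the operator has been factored as multiplication times a Fourier multiplier, the proof is a one-line application of Birman--Solomyak followed by the elementary integrability check above.
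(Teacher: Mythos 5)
Your proof is correct and follows essentially the same route as the paper: the paper also treats the proposition as a direct application of the Birman--Solomyak (Kato--Seiler--Simon) inequality to the factorization of $V(\la-H_0)^{-1}$ as a multiplication operator times the Fourier multiplier with symbol $(\la-|\xi|^{2s})^{-1}$, with the same caveat about recovering the range $1<p<2$ by duality. Your explicit integrability check of the symbol (finite near the origin since $\frac{d}{2s}-1>-1$, and at infinity precisely because $p>\frac{d}{2s}$) is consistent with the quantitative bounds the paper establishes separately in Proposition \ref{det-BR}.
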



\section{Bound on the resolvent.}\label{s-BS}

In this section, we bound the expression
$\|(\la-|\cdot |^{2s})^{-1}\|_{L^p}$ appearing in Proposition \ref{det-BS}.
The difficulty is to obtain the ``right'' bound when $s > \frac{d}{2}$.
Indeed, when $0<s \leq \frac{d}{2}$, we simply adapt the proof from \cite{DeHaKa} to dimensions $d\geq 1$.
The proof of the bound when $s> \frac{d}{2}$ requires more work.

We will repeatedly use the following elementary inequalities.
\begin{lemma}\label{i-lem}\hfill

\begin{enumerate}
\item Let $a,b \geq 0$, and $\al > 0 $, then
\begin{equation*}
\min\{1;2^{\al-1}\} (a^{\al}+b^{\al}) \leq (a+b)^{\al} \leq \max\{1;2^{\al-1}\} (a^{\al}+b^{\al}).
\end{equation*}

\item In particular, with $\al=2$, for $a,b \geq 0$, we have
\begin{equation*}
\sqrt{a^2+b^2}\leq a+b \leq \sqrt{2}\sqrt{a^2+b^2}.
\end{equation*}
\end{enumerate}
\end{lemma}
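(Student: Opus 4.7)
The plan is to reduce part (1) to a one-variable convexity estimate, then deduce part (2) as the specialization $\alpha=2$. Both sides of (1) are homogeneous of degree $\alpha$ in $(a,b)$, so (after dismissing the trivial case $a=b=0$) I would normalize $a+b=1$. Writing $a=t$ and $b=1-t$ with $t\in[0,1]$ and setting $g(t):=t^\alpha+(1-t)^\alpha$, the two asserted inequalities become the single chain
\[
\min\{1;2^{1-\alpha}\} \leq g(t) \leq \max\{1;2^{1-\alpha}\}, \qquad t \in [0,1],
\]
where I used the elementary identity $1/\min\{1;2^{\alpha-1}\}=\max\{1;2^{1-\alpha}\}$ (and symmetrically for the other extremum).

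Next I would analyze $g$ by convexity. Note that $g$ is symmetric about $t=1/2$, with $g(0)=g(1)=1$ and $g(1/2)=2\cdot(1/2)^\alpha=2^{1-\alpha}$. For $\alpha\geq 1$, the map $x\mapsto x^\alpha$ is convex on $\R^+$, hence $g$ is convex, so its minimum on $[0,1]$ is $g(1/2)=2^{1-\alpha}\leq 1$ and its maximum is $g(0)=1$; this matches exactly $\min\{1;2^{1-\alpha}\}=2^{1-\alpha}$ and $\max\{1;2^{1-\alpha}\}=1$. For $0<\alpha\leq 1$, $g$ is concave (sum of concave functions) and the roles of extremes swap, yielding $1\leq g(t)\leq 2^{1-\alpha}$, which again agrees with the stated min and max since now $2^{1-\alpha}\geq 1$. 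Undoing the homogeneity normalization proves (1) in both cases.

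Part (2) is then immediate from (1) applied with $\alpha=2$: here $\min\{1;2^{\alpha-1}\}=\min\{1;2\}=1$ and $\max\{1;2^{\alpha-1}\}=2$, so $a^2+b^2\leq(a+b)^2\leq 2(a^2+b^2)$, and taking positive square roots yields the stated double inequality. The lemma is entirely elementary; the only point that requires any care is the bookkeeping between $\min\{1;2^{\alpha-1}\}$ and $\max\{1;2^{1-\alpha}\}$ after the homogeneity reduction, and the case split at $\alpha=1$ where convexity switches to concavity. No genuine obstacle arises.
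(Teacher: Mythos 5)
Your proof is correct: the homogeneity reduction to $a+b=1$, the identification of the extrema of $g(t)=t^{\alpha}+(1-t)^{\alpha}$ via convexity for $\alpha\geq 1$ and concavity for $0<\alpha\leq 1$, and the bookkeeping $1/\min\{1;2^{\alpha-1}\}=\max\{1;2^{1-\alpha}\}$ all check out, and part (2) follows by specializing $\alpha=2$ and taking square roots. The paper states this lemma without proof, treating it as elementary, so there is no argument to compare against; your write-up is a complete and standard justification.
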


We recall that
$\mathsf{v}_{d-1}=\dfrac{2\pi^{\frac{d-1}{2}}}{\Gamma(\frac{d-1}{2})}$ for $d\geq 2$
and it is convenient to put $\mathsf{v}_{0}=2$ for $d=1$.

\begin{proposition}\label{det-BR}
Let  $\la=\la_0+ \mathrm{i} \la_1 \in \C\bsl\R^+$.
Set $\delta=\frac{d}{2s}-1$.

For $0<s\leq \frac{d}{2}$ \ and \ $p> \frac{d}{2s}$, we have
\begin{equation}\label{BR}
\left\|(\la-|\cdot |^{2s})^{-1} \right\|^p_{L^p} \leq \dfrac{\mathsf{v}_{d-1}}{2s} \cdot M_1 \cdot \dfrac{|\la|^{\frac{d}{2s}-1}}{d(\la,\si(H_0))^{p-1}},
\end{equation}
where
$M_1= \max\left\{K_2 ;
  \ds\int_0^{+\infty} \dfrac{t^{\delta}\,dt}{(t^2+1)^{\frac{p}{2}}} \right\}$,
and
$K_2$ is defined in \eqref{br-1} and depends on $d, p$, and $s$.

For $s>\frac{d}{2}$ \ and \ $p>1$, we have
\begin{equation}\label{BR1}
\left\|(\la-|\cdot |^{2s})^{-1} \right\|^p_{L^p} \leq \dfrac{\mathsf{v}_{d-1}}{2s} \cdot \dfrac{N_1}{d(\la,\si(H_0))^{p-\frac{d}{2s}}},
\end{equation}
where $N_1= \max\left\{
\ds\int_0^{+\infty}\dfrac{t^{\delta}}{(t^2+1)^{\frac{p}{2}}}\, dt
 ;\ds\int_0^1 t^{\delta}dt + 2\ds\int_0^{+\infty}
   \dfrac{dt}{(t^2+1)^{\frac{p}{2}}}\right\}$.
\end{proposition}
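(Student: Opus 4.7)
The strategy is to reduce the $d$-dimensional $L^p$ norm to a scalar integral via polar coordinates, and then bound that scalar integral separately according to the sign of $\mathrm{Re}(\la)$. After the change of variables $u=r^{2s}$ in the radial integral, one obtains
\begin{equation*}
\left\|(\la-|\cdot|^{2s})^{-1}\right\|_{L^p}^p=\frac{\mathsf{v}_{d-1}}{2s}\int_0^{+\infty}\frac{u^{\delta}\,du}{|\la-u|^p},
\end{equation*}
and the task reduces to bounding $I(\la):=\int_0^{+\infty}u^{\delta}|\la-u|^{-p}\,du$ by the right-hand sides of \eqref{BR} and \eqref{BR1}.

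Write $\la=\la_0+\mathrm{i}\la_1$, and recall that $d(\la,\R^+)=|\la|$ if $\la_0<0$, while $d(\la,\R^+)=|\la_1|$ if $\la_0\geq 0$. In the case $\la_0<0$, I rescale via $u=|\la|t$, yielding $I(\la)=|\la|^{\delta+1-p}\int_0^{+\infty}t^{\delta}|e^{\mathrm{i}\theta}-t|^{-p}\,dt$ with $\theta=\arg\la$. Since $\cos\theta<0$, the elementary estimate $|e^{\mathrm{i}\theta}-t|^2=1-2t\cos\theta+t^2\geq 1+t^2$ gives $I(\la)\leq|\la|^{\delta+1-p}\int_0^{+\infty}\frac{t^{\delta}\,dt}{(1+t^2)^{p/2}}$. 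The last integral converges since $\delta>-1$ and $p>\delta+1=\frac{d}{2s}$, both of which hold in each regime of the proposition. Since $d(\la,\R^+)=|\la|$ in this case, the right-hand side matches both \eqref{BR} and \eqref{BR1}, and explains the integral $\int_0^{+\infty}\frac{t^\delta\,dt}{(t^2+1)^{p/2}}$ appearing in $M_1$ and $N_1$.

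When $\la_0\geq 0$, I use instead the affine substitution $u=\la_0+|\la_1|t$, which produces $|\la-u|^2=|\la_1|^2(t^2+1)$ and $I(\la)=|\la_1|^{1-p}\int_{-\la_0/|\la_1|}^{+\infty}\frac{(\la_0+|\la_1|t)^{\delta}\,dt}{(t^2+1)^{p/2}}$. For $\delta\geq 0$ (\textit{i.e.}\ $0<s\leq\frac{d}{2}$), I split the $t$-integral at $0$: on $t\geq 0$, Lemma~\ref{i-lem} gives $(\la_0+|\la_1|t)^{\delta}\lesssim\la_0^{\delta}+|\la_1|^{\delta}t^{\delta}$; on $-\la_0/|\la_1|\leq t<0$, the monotone bound $(\la_0+|\la_1|t)^{\delta}\leq\la_0^{\delta}$ applies. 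Bounding $\la_0,|\la_1|\leq|\la|$ then factors out $|\la|^{\delta}/|\la_1|^{p-1}$ together with a constant $K_2$ built from $\int_0^{+\infty}\frac{dt}{(t^2+1)^{p/2}}$ and $\int_0^{+\infty}\frac{t^{\delta}\,dt}{(t^2+1)^{p/2}}$; combining with the $\la_0<0$ bound closes \eqref{BR}.

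The main obstacle is the range $\delta<0$ ($s>\frac{d}{2}$), where the Lemma-type splitting is unavailable because one cannot separate $(\la_0+|\la_1|t)^\delta$ into a sum of positive powers. My strategy is to split the original $u$-integral at the scale $u=|\la_1|$. On $[0,|\la_1|]$ the crude bound $|\la-u|\geq|\la_1|$ combined with $\int_0^{|\la_1|}u^{\delta}\,du=|\la_1|^{\delta+1}\int_0^1 t^{\delta}\,dt$ produces a term $|\la_1|^{\delta+1-p}\int_0^1 t^{\delta}\,dt$. On $[|\la_1|,+\infty)$ the inequality $u^{\delta}\leq|\la_1|^{\delta}$---valid precisely because $\delta<0$---lets me pull out $|\la_1|^{\delta}$, and the remaining $\int_{|\la_1|}^{+\infty}|\la-u|^{-p}\,du$ is estimated via the same affine substitution, extending integration to all of $\R$, by $2|\la_1|^{1-p}\int_0^{+\infty}(t^2+1)^{-p/2}\,dt$. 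Adding the two contributions rebuilds exactly the constant $N_1$ in \eqref{BR1} and concludes the proof.
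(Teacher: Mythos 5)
Your proof is correct and follows essentially the same route as the paper: polar coordinates, the bound $|\la-u|^2\geq |\la|^2+u^2$ for $\mathrm{Re}(\la)<0$, the affine substitution with the Lemma-based splitting for $\delta\geq 0$, and for $\delta<0$ a split at the scale $u=|\la_1|$, which after normalization is exactly the paper's split at $u=1$ and reproduces the same constant $N_1$. The only differences are cosmetic reparametrizations and a slightly different (but equally valid) assembly of the constant $K_2$.
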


\begin{proof}
We start with the polar change of variables
\begin{equation*}
\left\|(\la-|x|^{2s})^{-1} \right\|^p_{L^p} = \mathsf{v}_{d-1} \ds\int_0^{+\infty} \dfrac{r^{d-1}}{|r^{2s}-\la|^p}\,dr,
\end{equation*}
and we put
\begin{equation}\label{e:int21}
I = \ds\int_0^{+\infty} \dfrac{r^{d-1}}{|r^{2s}-\la|^p}\,dr
  = \ds\int_0^{+\infty}
    \dfrac{r^{d-1}}{|(r^{2s}-\la_0)^2+\la_1^2|^{\frac{p}{2}}}\,dr.
\end{equation}


First, we assume  that $\la_0<0$,
that is, $d(\la,\si(H_0))=|\la|$.
In \eqref{e:int21}, we use
$(r^{2s}-\la_0)^2\geq r^{4s}+\la_0^2$, and we make the change of variables
$t=\dfrac{r^{2s}}{|\la|}$, so
\begin{align}
I \leq  \ds\int_{0}^{+\infty}
    \dfrac{r^{d-1}}{(r^{4s}+|\la|^2)^{\frac{p}{2}}}\,dr
   = \dfrac{1}{2s} \cdot \dfrac{|\la|^{\frac{d}{2s}}}{|\la|^p}
    \ds\int_{0}^{+\infty}
    \dfrac{t^{\frac{d}{2s}-1}}{(t^2+1)^{\frac{p}{2}}}\,dt. \label{cv:int}
\end{align}
The integrals in \eqref{cv:int} converge since $p>\frac{d}{2s}>0$.
Hence, for $\la_0 <0$,
\begin{equation}\label{br-0}
I \leq \dfrac{1}{2s} \cdot \ds\int_{0}^{+\infty}
    \dfrac{t^{\frac{d}{2s}-1}}{(t^2+1)^{\frac{p}{2}}}\,dt \cdot
    \dfrac{|\la|^{\frac{d}{2s}-1}}{d(\la,\si(H_0))^{p-1}}.
\end{equation}


Second, we assume $\la_0\geq 0$ and $\la_1>0$
(since $\|(\la-|x|^{2s})^{-1} \|=\|(\bar{\la}-|x|^{2s})^{-1}\|$).
In \eqref{e:int21},
we obtain with the help of  the change of variables  $t=\dfrac{r^{2s}-\la_0}{\la_1}$,
\begin{align}
I  & = \dfrac{1}{2s\la_1^{p-1}}\ds\int_{-\frac{\la_0}{\la_1}}^{+\infty}
\dfrac{(\la_1t+\la_0)^{\frac{d-1}{2s}}(\la_1t+\la_0)^{\frac{1}{2s}-1}}
{(t^2+1)^{\frac{p}{2}}}\,dt \nonumber \\
  & = \dfrac{1}{2s\la_1^{p-1}}\ds\int_{-\frac{\la_0}{\la_1}}^{+\infty}
\dfrac{(\la_1t+\la_0)^{\frac{d}{2s}-1}}
{(t^2+1)^{\frac{p}{2}}}\,dt. \label{e:int2}
\end{align} 
We distinguish two cases: $0< s \leq \frac{d}{2}$\  and \ $s> \frac{d}{2}$.

If $s=\frac{d}{2}$, for $\mathrm{Re}(\la)\geq 0$, bound \eqref{BR} is obvious from \eqref{e:int2}.


Now assume that $0<s<\frac{d}{2}$ and put $\delta=\frac{d}{2s}-1$.
Since $\la_1>0, -\frac{\la_0}{\la_1}<0$, we have
\begin{equation*}
\ds\int_{-\frac{\la_0}{\la_1}}^{+\infty}
\dfrac{(\la_1t+\la_0)^{\delta}}
{(t^2+1)^{\frac{p}{2}}}\,dt
 =  \ds\int_{-\frac{\la_0}{\la_1}}^{0} 
\dfrac{(\la_1t+\la_0)^{\delta}}
{(t^2+1)^{\frac{p}{2}}}\,dt 
+ 
\ds\int_{0}^{+\infty}
\dfrac{(\la_1t+\la_0)^{\delta}}
{(t^2+1)^{\frac{p}{2}}}\,dt.
\end{equation*}

In the first integral on the right-hand side of this equality, we use that
$\la_1t+\la_0 \leq \la_0$.
As for the second term, we observe that, by Lemma \ref{i-lem}, 
$(\la_1t+\la_0)^{\delta} \leq C_{d,s}
\big((\la_1t)^{\delta}+\la_0^{\delta}\big)$.
Here,  $C_{d,s}=\max\{1;2^{\delta-1}\}$.
Hence, we have
\begin{align*}
I & \leq \dfrac{1}{2s\la_1^{p-1}}
\left[\la_0^{\delta}\ds\int_{-\frac{\la_0}{\la_1}}^{0}
\dfrac{1}
{(t^2+1)^{\frac{p}{2}}}\,dt \;+\right. \\
&\hspace*{1.5cm} \left. +C_{d,s}\la_1^{\delta}
\ds\int_{0}^{+\infty}
\dfrac{t^{\delta}}
{(t^2+1)^{\frac{p}{2}}}\,dt
+C_{d,s}\la_0^{\delta}
\ds\int_{0}^{+\infty}
\dfrac{1}
{(t^2+1)^{\frac{p}{2}}}\,dt\right]\\
& \leq \dfrac{K_1}{2s\la_1^{p-1}}\cdot \left[\la_0^{\delta}+\la_1^{\delta} \right],
\end{align*}
where
$K_1=\max\left\{(1+C_{d,s})\ds\int_{\R^+}\dfrac{1}{(t^2+1)^{\frac{p}{2}}}\,dt;
C_{d,s}\ds\int_{\R^+}\dfrac{t^{\delta}}{(t^2+1)^{\frac{p}{2}}}\,dt\right\}$.
Then, putting
$C'_{d,s}=\max\{1;2^{1-\delta}\}$,
we have by Lemma \ref{i-lem},
\begin{align*}
I & \leq \dfrac{K_1}{2s\la_1^{p-1}}\cdot C'_{d,s} (\la_0+\la_1)^{\delta} \\
 & \leq \dfrac{K_1}{2s\la_1^{p-1}}\cdot C'_{d,s}(\sqrt{2})^{\delta} |\la|^{\delta}.
\end{align*}
Consequently, for $\la_0 \geq 0$,
\begin{equation}\label{br-1}
I \leq \dfrac{K_2}{2s} \cdot \dfrac{|\la|^{\frac{d}{2s}-1}}{d(\la,\si(H_0))^{p-1}},
\end{equation}
where
$K_2= K_1\cdot C'_{d,s}\cdot 2^{\delta/2}$.
Recalling \eqref{br-0}, we obtain inequality \eqref{BR} in the case $s\leq \frac{d}{2}$.


Let us turn now to the case  $s>\frac{d}{2}$.  Suppose again that $\la_1 >0$.
We recall \eqref{e:int2}
\begin{align*}
I & = \dfrac{1}{2s\la_1^{p-1}}\ds\int_{-\frac{\la_0}{\la_1}}^{+\infty}
\dfrac{(\la_1t+\la_0)^{\delta}}
{(t^2+1)^{\frac{p}{2}}}\,dt.
\end{align*}
Since $-1<\delta=\frac{d}{2s}-1<0$, we cannot use the previous bound.
Making the change of variables $u=t+\frac{\la_0}{\la_1}$, we obtain
\begin{align*}
\ds\int_{-\frac{\la_0}{\la_1}}^{+\infty}
\dfrac{(\la_1t+\la_0)^{\delta}}
{(t^2+1)^{\frac{p}{2}}}\,dt
  & = \la_1^{\delta} \ds\int_{-\frac{\la_0}{\la_1}}^{+\infty} 
\dfrac{(t+\frac{\la_0}{\la_1})^{\delta}}
{(t^2+1)^{\frac{p}{2}}}\,dt \\
& = \la_1^{\delta}
\ds\int_{0}^{+\infty}
\dfrac{u^{\delta}}
{\left((u-\frac{\la_0}{\la_1})^2+1\right)^{\frac{p}{2}}}\,du.
\end{align*}
The last integral is bounded independently of $\lambda$, {\it  i.e., }
\begin{align*}
\ds\int_{0}^{+\infty}
\dfrac{u^{\delta}}
{\left((u-\frac{\la_0}{\la_1})^2+1\right)^{\frac{p}{2}}}\,du
 & \leq \ds\int_0^1 u^{\delta}du
   + \ds\int_1^{+\infty}
   \dfrac{1}{\left((u-\frac{\la_0}{\la_1})^2+1\right)^{\frac{p}{2}}}\,du\\
 & \leq \ds\int_0^1 u^{\delta}du + \ds\int_{\R}
   \dfrac{1}{(u^2+1)^{\frac{p}{2}}}\,du.
\end{align*}
Indeed, in the first inequality, we use
$(u-\frac{\la_0}{\la_1})^2+1\geq 1$ when $0\leq u \leq 1$,
and $u^{\delta} \leq 1$ when $u\geq 1$ (since $\delta<0$ and $p>1$).
Hence, for $\la_0 \geq 0$,
\begin{equation}\label{br-2}
I \leq \dfrac{K_3}{2s}\cdot\dfrac{\la_1^{\delta}}{\la_1^{p-1}}=
\dfrac{K_3}{2s}\cdot\dfrac{1}{d(\la,\si(H_0))^{p-\frac{d}{2s}}},
\end{equation}
with $K_3=\ds\int_0^1 u^{\delta}du + 2\ds\int_0^{+\infty}
   \dfrac{du}{(u^2+1)^{\frac{p}{2}}}$.
Recalling \eqref{br-0}, the proof of \eqref{BR1} is finished.
\end{proof}


\section{Proof of Theorem \ref{T1}.}\label{s-proof1}

Reminding \eqref{def:f}, we have
$f(\la)={\det}_{\lceil p \rceil}(\Id-F(\la))$ for $\la \in \rho(H_0)=\C\bsl\R^{+}$,
where
\[F(\la):=(\la+a)(a+H)^{-1}V(\la-H_0)^{-1} \in \Sg_p,\quad  p\geq 1.\]
Inequality \eqref{i-det} implies that
\begin{equation*}
\log(|f(\la)|) \leq
\Gamma_p \|(\la+a)(a+H)^{-1}V(\la-H_0)^{-1}\|_{\Sg_p}^p. 
\end{equation*}
From \cite[Lemma 3.3.4]{Ha}, we know the following bound on $\|(-a-H)^{-1}\|$ for some $a>0$.
Then, there exists $\omega \geq 1$, depending on $d, p, s$, and $V$,
such that for any $a\geq \omega$
\begin{equation}\label{maj:opp}
\|(-a-H)^{-1}\| \leq \dfrac{C_{\omega}}{|\omega-a|},
\end{equation}
where $C_{\omega}=(1- \|V(-\omega-H_0)^{-1}\|)^{-1}$.
By Proposition \ref{det-BS}, we get the next inequality
for $p>1$ and $\la \in \C\bsl \R^{+}$
\begin{align}\label{i:proof}
\log |f(\la)| \leq \dfrac{\Gamma_p}{(2\pi)^d}\cdot
\dfrac{C_{\omega}^p}{|\omega-a|^p}\cdot |\la+a|^p \|V\|_{L^p}^p\|(\la-|\cdot |^{2s})^{-1} \|^p_{L^p}.
\end{align}


Since $0<s\leq \frac{d}{2}$, from \eqref{BR}, we have
\begin{equation}\label{aa1}
\log |f(\la)| \leq  \dfrac{K_1\, C_{\omega}^p}{|\omega-a|^p}  \cdot \|V\|_{L^p}^p\cdot 
\dfrac{|\la+a|^p|\la|^{\frac{d}{2s}-1}}{d(\la,\si(H_0))^{p-1}},
\end{equation}
where 
\begin{equation}\label{const:1}
K_1=\dfrac{\Gamma_p}{(2\pi)^d}\cdot \dfrac{\mathsf{v}_{d-1}}{2s}\cdot M_1
\end{equation}
and $M_1$ is defined in \eqref{BR}.


We now transfer the above inequality on $\D$ in order to apply Theorem \ref{BGK}.
That is, we consider the function
$g(z)=f\circ \varphi_a(z)$,
where $\varphi_a$ is defined by \eqref{phi}.
It is clearly holomorphic on $\D$.
By definition \eqref{phi},  we see
$|\la+a| = \dfrac{4a|z|}{|z-1|^2}$.
So, Proposition \ref{CM:2} applied to inequality \eqref{aa1} gives
\begin{align}
\log |g(z)| & \leq \dfrac{K_1\, C_{\omega}^p}{|\omega-a|^p} \cdot  \|V\|_{L^p}^p \cdot
 \dfrac{(4a)^p \,|z|^{p}\,a^{\frac{d}{2s}-1}|z+1|^{\frac{d}{s}-2}|z-1|^{3(p-1)}}
 {|z-1|^{2p}|z-1|^{\frac{d}{s}-2}a^{p-1}d(z,\T)^{p-1}|z+1|^{p-1}} \nonumber\\
  & \leq \dfrac{K_2\, a^{\frac{d}{2s}}}{|\omega-a|^p} \cdot \|V\|_{L^p}^p \cdot
 \dfrac{|z|^p}
 {d(z,\T)^{p-1}|z+1|^{p-\frac{d}{s}+1}|z-1|^{\frac{d}{s}-p+1}} \label{i-s1},
\end{align}
with $K_2=4^p K_1 C_{\omega}^p$.


Now, by Theorem \ref{BGK}, we have for all $\tau > 0$,
\begin{equation}\label{e:gen}
\ds\sum_{g(z)=0} (1-|z|)^{p+\tau} |z-1|^{(\frac{d}{s}-p+\tau)_+}
|z+1|^{(p-\frac{d}{s}+\tau)_+}
\leq C \cdot \dfrac{K_2\, a^{\frac{d}{2s}}}{|\omega-a|^p} \cdot \|V\|_{L^p}^p,
\end{equation}
where $K_2$ is defined above and $C$ depends on $d,p,s$, and $\tau$.
There are three cases to consider: Case 1: $\frac{d}{2s} < p < \frac{d}{s}$, 
Case 2: $p = \frac{d}{s}$,  and Case 3: $p > \frac{d}{s}$.

For each case we will transfer the relation \eqref{i-s1} back to $\rho(H_0)=\C\bsl\R^{+}$.
Recalling Proposition \ref{CM:2}, we see
\begin{align*}
  1-|z|=d(z,\T) & \geq
\dfrac{\sqrt{a}}{4} \cdot \dfrac{d(\la,\si(H_0))}{|\la|^{1/2}(a+|\la|)},\\
  |z+1|^2 \geq \dfrac{2|\la|}{a+|\la|},
& \quad \mbox{and} \quad
  |z-1|^2 \geq \dfrac{2a}{a+|\la|}.
\end{align*}
Then we will integrate the resulting inequality with respect to $a \in [\omega ; +\infty[$ to get to a sharper bound.
This integration follows the idea of Demuth, Hansmann, and Katriel
(see \cite{DeHaKa} or \cite{DeHaKa1}).

Since the computations are similar for all above cases, we give the details of the proof in the first case and present only the main steps in the remaining cases.


\subsection{Case 1: $\frac{d}{2s} < p < \frac{d}{s}$.}
For $0<\tau <\frac{d}{s}-p $, relation \eqref{e:gen} becomes
\begin{equation}\label{case1}
\ds\sum_{g(z)=0}(1-|z|)^{p+\tau}|z-1|^{\frac{d}{s}-p+\tau}
\leq C\cdot \dfrac{K_2\, a^{\frac{d}{2s}}}{|\omega-a|^p} \cdot \|V\|_{L^p}^p,
\end{equation}
We transfer this inequality back to $\rho(H_0)$, {\it i.e.,}
\[  (1-|z|)^{p+\tau}|z-1|^{\frac{d}{s}-p+\tau}
\geq 
  \dfrac{a^{\frac{d}{2s}+\tau}}{2^{\frac{5p}{2}+\frac{3\tau}{2}-\frac{d}{2s}}} \cdot 
  \dfrac{d(\la,\si(H_0))^{p+\tau}}
  {|\la|^{\frac{p+\tau}{2}}(a+|\la|)^{\frac{d}{2s}+\frac{p}{2}+\frac{3\tau}{2}}},
\]
so
\begin{equation}\label{i1:case1}
 \ds\sum_{\la \in \si_d(H)}  \dfrac{d(\la,\si(H_0))^{p+\tau}}
 {|\la|^{\frac{p+\tau}{2}}(a+|\la|)^{\frac{d}{2s}+\frac{p}{2}+\frac{3\tau}{2}}}
\lesssim  a^{-\tau}\cdot \dfrac{K_2}{|\omega-a|^p}\cdot
 2^{\frac{5p}{2}+\frac{3\tau}{2}-\frac{d}{2s}}\cdot \|V\|_{L^p}^p.
\end{equation}


We can now perform the integration with respect to $a \in [\omega; +\infty[$.
All terms in this relation are positive, so we can permute the sum and the integral by the Fubini Theorem.
Consequently, we obtain 
\begin{equation*}
\ds\sum_{\la \in \si_d(H)} 
 \dfrac{d(\la,\si(H_0))^{p+\tau}}{|\la|^{\frac{p+\tau}{2}}}
 \ds\int_{\omega}^{+\infty} \dfrac{|\omega-a|^p a^{-1}}{(a+|\la|)^{\frac{d}{2s}+\frac{p}{2}+\frac{3\tau}{2}}}\,da
\lesssim  \ds\int_{\omega}^{+\infty} \dfrac{da}{a^{1+\tau}} \cdot \|V\|_{L^p}^p.
\end{equation*}
Obviously, $\ds\int_{\omega}^{+\infty} \dfrac{da}{a^{1+\tau}}= \dfrac{1}{\tau \omega^{\tau}}$.
In the left-hand side of this relation, we use the bound $a \leq a +|\la|$
and we make the change of variables
$t =\dfrac{a-\omega}{|\la|+\omega}$. Hence, we come to
\begin{align*}
 \ds\int_{\omega}^{+\infty} \dfrac{|\omega-a|^p a^{-1}}{(a+|\la|)^{\frac{d}{2s}+\frac{p}{2}+\frac{3\tau}{2}}}\,da & \geq
  \ds\int_{\omega}^{+\infty} \dfrac{|\omega-a|^p }{(a+|\la|)^{\frac{d}{2s}+\frac{p}{2}+1+\frac{3\tau}{2}}}\,da \\
  & \geq \dfrac{(|\la|+\omega)^{p+1}}{(|\la|+\omega)^{\frac{d}{2s}+\frac{p}{2}+1+\frac{3\tau}{2}}}
   \ds\int_{0}^{+\infty} \dfrac{t^p \,dt}{(t+1)^{\frac{d}{2s}+\frac{p}{2}+1+\frac{3\tau}{2}}}.
\end{align*}
So, for $\frac{d}{2s}<p<\frac{d}{s}$,
\begin{equation}\label{i2:case1}
\ds\sum_{\la \in \si_d(H)}  \dfrac{d(\la,\si(H_0))^{p+\tau}}
 {|\la|^{\frac{p+\tau}{2}}(\omega+|\la|)^{\frac{d}{2s}-\frac{p}{2}+\frac{3\tau}{2}}}
\leq C \cdot \dfrac{K_1\, C_{\omega}^p}{I_1\, \tau \omega^{\tau}} \cdot 2^{\delta_1} \cdot \|V\|_{L^p}^p,
\end{equation}
where $K_1$ is defined in \eqref{const:1}, $C$ is defined in \eqref{e:gen},
$I_1= \ds\int_{0}^{+\infty} 
\dfrac{t^p \,dt}{(t+1)^{\frac{d}{2s}+\frac{p}{2}+1+\frac{3\tau}{2}}}$,
and $\delta_1=\frac{9p}{2}+\frac{3\tau}{2}-\frac{d}{2s}$.

\subsection{Case 2: $p=\frac{d}{s}$.}

Reminding  \eqref{e:gen}, we obtain 
\begin{equation}\label{case2}
\ds\sum_{g(z)=0}(1-|z|)^{p+\tau}|z-1|^{\tau}|z+1|^{\tau}
\leq C\cdot \dfrac{K_2\, a^{\frac{d}{2s}}}{|\omega-a|^p} \cdot \|V\|_{L^p}^p.
\end{equation}
Furthermore, we have
\[ (1-|z|)^{p+\tau}|z-1|^{\tau}|z+1|^{\tau}
\geq 
  \dfrac{a^{\frac{p}{2}+\tau}}{2^{2p+\tau}}\cdot
  \dfrac{d(\la,\si_d(H_0))^{p+\tau}}
  {|\la|^{\frac{p}{2}}(a+|\la|)^{p+2\tau}},\]
so, for all $0<\tau <1$,
\begin{equation}\label{i1:case2}
 \ds\sum_{\la \in \si(H)} \dfrac{d(\la,\si(H_0))^{p+\tau}}
 {|\la|^{\frac{p}{2}}(a+|\la|)^{p+2\tau}}
\lesssim a^{-\tau}\cdot \dfrac{K_2}{|\omega-a|^p}
\cdot 2^{2p+\tau} \cdot \|V\|_{L^p}^p.
\end{equation}
After integration, we find  
\begin{equation}\label{i2:case2}
\ds\sum_{\la \in \si_d(H)}  \dfrac{d(\la,\si(H_0))^{p+\tau}}
 {|\la|^{\frac{p}{2}}(\omega+|\la|)^{2\tau}}
\leq C \cdot \dfrac{K_1\, C_{\omega}^p}{I_2\, \tau \omega^{\tau}} \cdot 2^{\delta_2}\cdot
 \|V\|_{L^p}^p,
\end{equation}
where $K_1$ is defined in \eqref{const:1}, $C$ is defined in \eqref{e:gen},
$I_2=\ds\int_{0}^{+\infty} \dfrac{t^p}{(t+1)^{p+1+2\tau}} \,dt$,
and $\delta_2=4p+\frac{3}{2}\tau$.
In \eqref{i2:case2}, we used $1 \leq 2^{\tau/2}$, to obtain a clear statement of $\delta_j$ in Remark \ref{rem:const1}.

\subsection{Case 3: $p>\frac{d}{s}$.}

For $0<\tau< p-\frac{d}{s}$, relation \eqref{e:gen} becomes
\begin{equation}\label{case3}
\ds\sum_{g(z)=0}(1-|z|)^{p+\tau}|z+1|^{p-\frac{d}{s}+\tau} 
\leq C\cdot \dfrac{K_2\, a^{\frac{d}{2s}}}{|\omega-a|^p} \cdot \|V\|_{L^p}^p.
\end{equation}
Then, we get
\[ (1-|z|)^{p+\tau}|z+1|^{p-\frac{d}{s}+\tau}
\geq 
   \dfrac{a^{\frac{p+\tau}{2}}}{2^{\frac{3}{2}(p+\tau)+\frac{d}{2s}}}\cdot
   \dfrac{d(\la,\si(H_0))^{p+\tau}}
  {|\la|^{\frac{d}{2s}}(a+|\la|)^{\frac{3p}{2}-\frac{d}{2s}+\frac{3\tau}{2}}},\]
and, for $0<\tau< p-\frac{d}{s}$,
\begin{equation}\label{i1:case3}
 \ds\sum_{\la \in \si_d(H)} \dfrac{d(\la,\si(H_0))^{p+\tau}}
  {|\la|^{\frac{d}{2s}}(a+|\la|)^{\frac{3}{2}(p+\tau)-\frac{d}{2s}}}
\lesssim  a^{\frac{d}{2s}-\frac{p+\tau}{2}}\cdot \dfrac{K_2}{|\omega-a|^p}
\cdot 2^{\frac{3}{2}(p+\tau)+\frac{d}{2s}} \cdot \|V\|_{L^p}^p.
\end{equation}
We integrate the above inequality
\begin{equation*}
\ds\sum_{\la \in \si_d(H)}
 \dfrac{d(\la,\si(H_0))^{p+\tau}}{|\la|^{\frac{d}{2s}}}
 \ds\int_{\omega}^{+\infty} \dfrac{|\omega-a|^p a^{\frac{p+\tau}{2}-\frac{d}{2s}-1-\tau}}{(a+|\la|)^{\frac{3}{2}(p+\tau)-\frac{d}{2s}}}\,da
\lesssim  \dfrac{\|V\|_{L^p}^p}{\tau\omega^{\tau}}.
\end{equation*}
As before, we do the change of variables $t =\dfrac{a-\omega}{|\la|+\omega}$,
and we distinguish two cases:
if $\frac{1}{2}(p-\frac{d}{s}-2-\tau) <0$,
we  use the bound $[(|\la|+\omega)t+\omega] \leq (|\la|+\omega)(t+1)$,
and if $\frac{1}{2}(p-\frac{d}{s}-2-\tau) \geq 0$,
we apply the bound from below $[(|\la|+\omega)t+\omega] \geq (|\la|+\omega)t$.
We present the case $\frac{1}{2}(p-\frac{d}{s}-2-\tau) \geq 0$ in details,
the other cases are analogous and are omitted. We see
\begin{align*}
& \; \ds\int_{\omega}^{+\infty} \dfrac{|\omega-a|^p a^{\frac{p}{2}-\frac{d}{2s}-1-\frac{\tau}{2}}}{(a+|\la|)^{\frac{3}{2}(p+\tau)-\frac{d}{2s}}}\,da  = \\
= & \; \dfrac{(|\la|+\omega)^{p+1}}{(|\la|+\omega)^{\frac{3}{2}(p+\tau)-\frac{d}{2s}}}
   \ds\int_{0}^{+\infty} \dfrac{t^p[(|\la|+\omega)t+\omega]^{\frac{p}{2}-\frac{d}{2s}-1-\frac{\tau}{2}}}{(t+1)^{\frac{3}{2}(p+\tau)-\frac{d}{2s}}}\,dt \\
\geq   & \; \dfrac{(|\la|+\omega)^{\frac{p}{2}-\frac{d}{2s}-1-\frac{\tau}{2}}}
   {(|\la|+\omega)^{\frac{p}{2}-\frac{d}{2s}-1+\frac{3\tau}{2}}}
   \ds\int_{0}^{+\infty} \dfrac{t^{\frac{3p}{2}-\frac{d}{2s}-1-\frac{\tau}{2}}}{(t+1)^{\frac{3}{2}(p+\tau)-\frac{d}{2s}}}\,dt.
\end{align*}
So, if $\frac{1}{2}(p-\frac{d}{s}-2-\tau)\geq 0$, we obtain
\begin{equation}\label{i2:case3a}
\ds\sum_{\la \in \si_d(H)}  \dfrac{d(\la,\si(H_0))^{p+\tau}}
 {|\la|^{\frac{d}{2s}}(\omega+|\la|)^{2\tau}}
\leq C \cdot \dfrac{K_1\, C_{\omega}^p}{I_3\, \tau \omega^{\tau}}  \cdot 2^{\delta_3} \cdot \|V\|_{L^p}^p,
\end{equation}
where $K_1$ is defined in \eqref{const:1}, $C$ in \eqref{e:gen},
$I_3= \ds\int_{0}^{+\infty} \dfrac{t^{\frac{3p}{2}-\frac{d}{2s}-1-\frac{\tau}{2}}}{(t+1)^{\frac{3}{2}(p+\tau)-\frac{d}{2s}}}\,dt$,
and $\delta_3=\frac{7}{2}p+\frac{d}{2s}+\frac{3}{2}\tau$.

When $\frac{1}{2}(p-\frac{d}{s}-2-\tau) <0$, we obtain
\begin{equation}\label{i2:case3b}
\ds\sum_{\la \in \si_d(H)}  \dfrac{d(\la,\si(H_0))^{p+\tau}}
 {|\la|^{\frac{d}{2s}}(\omega+|\la|)^{2\tau}}
\leq C \cdot \dfrac{K_1\, C_{\omega}^p}{I_4\, \tau \omega^{\tau}}  \cdot 2^{\delta_4} \cdot \|V\|_{L^p}^p,
\end{equation}
where $K_1$ is defined in \eqref{const:1}, $C$ is defined in \eqref{e:gen},
$I_4= \ds\int_{0}^{+\infty} \dfrac{t^{p}}{(t+1)^{p+1+2\tau}}\,dt$,
and $\delta_4=\delta_3=\frac{7}{2}p+\frac{d}{2s}+\frac{3}{2}\tau$.

In relations \eqref{i2:case1}, \eqref{i2:case2}, \eqref{i2:case3a}, and \eqref{i2:case3b},
we use the bound $\omega +|\la| \leq \omega(1+|\la|)$, because $\omega \geq 1$,
and we come to inequality \eqref{t11}.
Thus, the proof in the case $0<s\leq \frac{d}{2}$ is finished.
\hfill $\Box$

\begin{remark}\label{rem:const1}
In the above inequalities, one has
\begin{align*}
I_j & = \ds\int_{0}^{+\infty} \dfrac{t^{p+\frac{1}{2}(p-\frac{d}{s}-2-\tau)_+}}
{(t+1)^{p+1+2\tau+\frac{1}{2}\max\{\frac{d}{s}-p-2\tau; 0; p-\frac{d}{s}-2-\tau\}}} \,dt,\\
\delta_j&  =\frac{7p}{2}+\frac{3\tau}{2}+\min\{p; \frac{d}{s}\}-\frac{d}{2s},
\end{align*}
where $j=1,\dots, 4$.
\end{remark}


\section{Proof of Theorem \ref{T1b}.}\label{s-proof1b}

Taking into account \eqref{BR1} and \eqref{maj:opp},
inequality \eqref{i:proof} becomes for $\la\in \rho(H_0)$
\begin{equation*}
\log |f(\la)| \leq \dfrac{K_4\, C_{\omega}^p}{|\omega-a|^p} \cdot \|V\|_{L^p}^p \cdot
\dfrac{|\la+a|^p}{d(\la,\si(H_0))^{p-\frac{d}{2s}}},
\end{equation*}
since $s>\frac{d}{2}$ and we have
\begin{equation}\label{const:2}
K_4=\dfrac{\Gamma_p}{(2\pi)^d}\cdot  \dfrac{\mathsf{v}_{d-1}}{2s}\cdot N_1,
\end{equation}
where $N_1$ depends on $d, p$, and $s$ only.
As before, we have
\begin{equation*}
\log|g(z)|  \leq \dfrac{K_4\, C_{\omega}^p}{|\omega-a|^p}\cdot
\dfrac{4^p a^{\frac{d}{2s}}|z|^p}{d(z,\T)^{p-\frac{d}{2s}} |z-1|^{\frac{3d}{2s}-p}
  |z+1|^{p-\frac{d}{2s}}}.
\end{equation*}
We set $K_5=4^p K_4\, C_{\omega}^p$. 
Applying Theorem \ref{BGK}, we have
\begin{align}\label{e:gen2}
\ds\sum_{g(z)=0} (1-|z|)^{p-\frac{d}{2s}+1+\tau}
 |z-1|^{(\frac{3d}{2s}-p-1+\tau)_+}
 & |z+1|^{(p-\frac{d}{2s}-1+\tau)_+} \leq \\
& \leq \dfrac{C \, K_5\, a^{\frac{d}{2s}}}{|\omega-a|^p} \cdot \|V\|_{L^p}^p. \nonumber
\end{align}
The separation in different cases with respect to $p$ and $\frac{d}{2s}$ is clear from the following picture (Figure \ref{fig}).
The x-axis represents $p$ and the y-axis represents $\frac{d}{2s}$.
There are four straight lines  given by $y=1$, $x-y-1=0$, $-x+3y-1=0$,
and $x-3y-1=0$.


\begin{figure}[htbp]
\includegraphics[width=10cm]{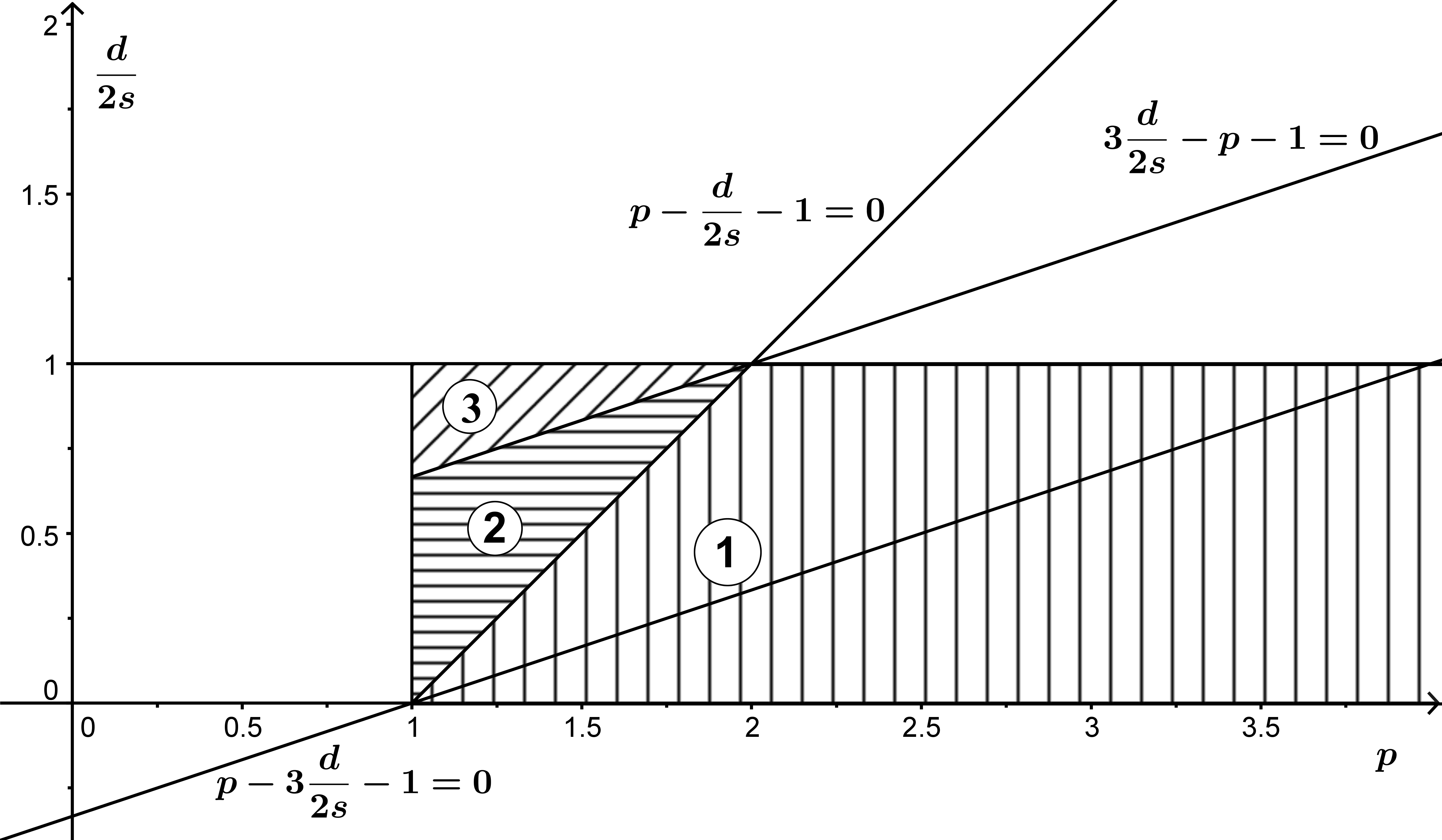}
\caption{The different cases}
\label{fig}
\end{figure}

So, we have three different cases to consider: Case 1: 
$p-\frac{d}{2s}-1 \geq 0$ \ and \ $\frac{3d}{2s}-p-1 < 0$, Case 2:  $p-\frac{d}{2s}-1 < 0$ and $\frac{3d}{2s}-p-1 < 0$, and
Case 3: $p-\frac{d}{2s}-1 < 0$ and $\frac{3d}{2s}-p-1\geq 0$.
Below, the computations are similar to the case $s\leq \frac{d}{2}$, so they are omitted.


\subsection{Case 1: $p-\frac{d}{2s}-1 \geq 0$ and $\frac{3d}{2s}-p-1 < 0$.}
We find
\begin{equation}\label{i1:case4}
\ds\sum_{\la \in \si_d(H)}
  \dfrac{d(\la,\si(H_0))^{p-\frac{d}{2s}+1+\tau}}
  {|\la|(a+|\la|)^{3\frac{p+\tau}{2}-\frac{3d}{4s}+\frac{1}{2}}}
\lesssim \dfrac{a^{-\frac{1}{2}(p+1-\frac{3d}{2s}+\tau)}}{|\omega-a|^p}
 \cdot \|V\|_{L^p}^p.
\end{equation}
It remains to integrate with respect to $a$ on $[\omega;+\infty[$.
We do it in the same way as in the case $0<s\leq \frac{d}{2}$. 

From \eqref{i1:case4}, we obtain
\begin{equation*}
\ds\sum_{\la \in \si_d(H)} \dfrac{d(\la,\si(H_0))^{p+1-\frac{d}{2s}+\tau}}{|\la|}
 \ds\int_{\omega}^{+\infty} \dfrac{|\omega-a|^p a^{\frac{1}{2}(p-\frac{3d}{2s}-1-\tau)}}{(a+|\la|)^{3\frac{p+\tau}{2}-\frac{3d}{4s}+\frac{1}{2}}}\,da
\lesssim  \dfrac{\|V\|_{L^p}^p}{\tau\omega^{\tau}}.
\end{equation*}
Hence, if $p-\frac{3d}{2s}-1 > 0$, and  $\tau>0$ is small enough,
\begin{equation}\label{i2:case4a}
\ds\sum_{\la \in \si_d(H)}  \dfrac{d(\la,\si(H_0))^{p+1-\frac{d}{2s}+\tau}}
 {|\la|(\omega+|\la|)^{2\tau}}
\leq C\cdot \dfrac{K_4\, C_{\omega}^p}{I_5\, \tau \omega^{\tau}}  \cdot 2^{\delta_5} \cdot \|V\|_{L^p}^p,
\end{equation}
where $K_4$ is defined in \eqref{const:2}, $C$ in \eqref{e:gen2},
$I_5=\ds\int_{0}^{+\infty} \dfrac{t^{\frac{1}{2}(3p-\frac{3d}{2s}-1-\tau)}} {(t+1)^{3\frac{p+\tau}{2}-\frac{3d}{4s}+\frac{1}{2}}}\,dt$,
and $\delta_5=\frac{1}{2}(7p+5-\frac{3d}{2s}+3\tau)$.

Otherwise, if $p-\frac{3d}{2s}-1 \leq 0$, we have
\begin{equation}\label{i2:case4b}
\ds\sum_{\la \in \si_d(H)}  \dfrac{d(\la,\si(H_0))^{p+1-\frac{d}{2s}+\tau}}
 {|\la|(\omega+|\la|)^{2\tau}}
\leq C\cdot \dfrac{K_4\, C_{\omega}^p}{I_6\, \tau \omega^{\tau}}  \cdot 2^{\delta_6} \cdot \|V\|_{L^p}^p,
\end{equation}
where $K_4$ is defined in \eqref{const:2}, $C$ is defined in \eqref{e:gen2},
$I_6= \ds\int_{0}^{+\infty} \dfrac{t^p} {(t+1)^{p+1+2\tau}}\,dt $,
and $\delta_6=\delta_5=\frac{1}{2}(7p+5-\frac{3d}{2s}+3\tau)$.


\subsection{Case 2: $p-\frac{d}{2s}-1 < 0$ and $\frac{3d}{2s}-p-1 < 0$.}
We have
\begin{equation}\label{i1:case5}
\ds\sum_{\la \in \si_d(H)}\dfrac{d(\la,\si(H_0))^{p-\frac{d}{2s}+1+\tau}}
  {|\la|^{\frac{1}{2}(p+1-\frac{d}{2s}+\tau)}(a+|\la|)^{p+1-\frac{d}{2s}+\tau}}
\lesssim \dfrac{a^{-\frac{1}{2}(p+1-\frac{3d}{2s}+\tau)}}{|\omega-a|^p} 
 \cdot \|V\|_{L^p}^p.
\end{equation}
Integrating this inequality gives
\begin{equation}\label{i2:case5}
\ds\sum_{\la \in \si_d(H)}  \dfrac{d(\la,\si(H_0))^{p+1-\frac{d}{2s}+\tau}}
 {|\la|^{\frac{1}{2}(p+1-\frac{d}{2s}+\tau)}
 (\omega+|\la|)^{\frac{1}{2}(\frac{d}{2s}-p+1+3\tau)}}
\leq C\cdot \dfrac{K_4\, C_{\omega}^p}{I_7\, \tau \omega^{\tau}} \cdot 2^{\delta_7} \cdot \|V\|_{L^p}^p,
\end{equation}
where $K_4$ is defined in \eqref{const:2}, $C$ in \eqref{e:gen2},
$I_7=\ds\int_{0}^{+\infty} \dfrac{t^p\, dt}{(t+1)^{\frac{1}{2}(p+\frac{d}{2s}+3+3\tau)}}$, and
$\delta_7=2(2p+1-\frac{d}{2s}+\tau)$.
We recall that $0< p-\frac{d}{2s} < 1$, hence $\frac{d}{2s}-p+1>0$.

\subsection{Case 3: $p-\frac{d}{2s}-1 < 0$ and $\frac{3d}{2s}-p-1\geq 0$.}
This time, we have
\begin{equation}\label{i1:case6}
\ds\sum_{\la \in \si_d(H)}
  \dfrac{d(\la,\si(H_0))^{p-\frac{d}{2s}+1+\tau}}
  {|\la|^{\frac{p+1+\tau}{2}-\frac{d}{4s}}
  (a+|\la|)^{\frac{p+1}{2}+\frac{d}{4s}+\frac{3\tau}{2}}}
\lesssim  \dfrac{a^{-\tau} }{|\omega-a|^p} \cdot \|V\|_{L^p}^p.
\end{equation}
After integration, the previous inequality becomes
\begin{equation}\label{i2:case6}
\ds\sum_{\la \in \si_d(H)}  \dfrac{d(\la,\si(H_0))^{p+1-\frac{d}{2s}+\tau}}
 {|\la|^{\frac{1}{2}(p+1-\frac{d}{2s}+\tau)}(\omega+|\la|)^{\frac{1}{2}(\frac{d}{2s}-p+1+3\tau)}}
\leq C\cdot \dfrac{K_4\, C_{\omega}^p}{I_8\, \tau \omega^{\tau}} \cdot
2^{\delta_8} \cdot \|V\|_{L^p}^p,
\end{equation}
where $K_4$ is defined in \eqref{const:2}, $C$ in \eqref{e:gen2},
$I_8=\ds\int_{0}^{+\infty} \dfrac{t^p \,dt}
{(t+1)^{\frac{1}{2}(p+3+\frac{d}{2s}+3\tau)}}$,
and $\delta_8= \frac{9}{2}p+\frac{5}{2}-\frac{7d}{4s}+\frac{3}{2}\tau$.
As before,  $0< p-\frac{d}{2s} < 1$, and so $\frac{d}{2s}-p+1>0$.

To make the statement of the theorem more transparent in Case 2,
we use $\dfrac{1}{(1+|\la|)^{\tau}} \geq \dfrac{1}{(1+|\la|)^{\frac{3\tau}{2}}}$.
This gives the power $\beta$ in relation \eqref{t12}.
Finally, since $\omega \geq 1$, we bound $\omega +|\la| \leq w(1+|\la|)$.
The proof of Theorem \ref{T1b} is finished.
\hfill $\Box$

\begin{remark}
In the above inequalities, one has
\begin{align*}
I_j & = \ds\int_0^{+\infty} \dfrac{t^{p+\frac{1}{2}(p-\frac{3d}{2s}-1-\tau)_+}}
{(t+1)^{p+1+2\tau+\frac{1}{2}\max\{p-\frac{3d}{2s}-1-\tau
; 0; \frac{d}{2s}+1-p-\tau\}}} \,dt, \\
\delta_j & = 2(2p+1-\frac{d}{2s}+\tau)-\frac{1}{2}\max\{p-\frac{d}{2s}-1+\tau;0;\frac{3d}{2s}-p-1+\tau\},
\end{align*}
where $j=5,\dots, 8.$
\end{remark}


\section{Lieb-Thirring bound using a theorem from \cite{Ha1}.}\label{s-proofH}

\subsection{Hansmann's Theorem and conformal mapping.}

The following theorem is the key ingredient for the proof of \eqref{t2}.
It is proved in \cite{Ha1}.

\begin{theorem}\label{TH}
Let $A$ be a normal bounded operator and $B$ be an operator such that
$B-A \in \Sg_p$ for some $p\geq 1$.
Suppose also that $\si(A)$ is convex. Then the following inequality holds
\begin{equation*}
\ds\sum_{\la \in \si_d(B)} d\left(\la,\si(A)\right)^p
\leq \| B-A \|_{\Sg_p}^p.
\end{equation*}
\end{theorem}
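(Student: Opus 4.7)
The strategy is to encode the eigenvalues of $B$ outside $\si(A)$ as zeros of a regularized perturbation determinant on $\Omega:=\C\setminus\si(A)$, transport the problem to the unit disc through a Riemann map (available because $\si(A)$ convex implies $\Omega\cup\{\infty\}\subset\widehat{\C}$ is simply connected), and finally extract the summability of the distances from a Blaschke-type condition. Set $K:=B-A\in\Sg_p$ and form
\[
f(z):={\det}_{\lceil p\rceil}\bigl(I-K(z-A)^{-1}\bigr),\qquad z\in\Omega.
\]
Since $K$ is compact, Weyl's essential-spectrum theorem gives $\si_{\ess}(B)=\si_{\ess}(A)\subseteq\si(A)$, so $\si_d(B)\subset\Omega$; the standard theory of regularized determinants (see \cite{GoKr,Si1}) identifies the zeros of $f$ in $\Omega$, counted with algebraic multiplicity, with the points of $\si_d(B)$.

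The normality of $A$ combined with the spectral theorem gives the sharp resolvent bound $\|(z-A)^{-1}\|=1/d(z,\si(A))$ on $\Omega$, hence
\[
\|K(z-A)^{-1}\|_{\Sg_p}\leq \dfrac{\|K\|_{\Sg_p}}{d(z,\si(A))},
\]
and combining with \eqref{i-det} produces the growth estimate
\[
\log|f(z)|\leq \dfrac{\Gamma_p\,\|K\|_{\Sg_p}^{p}}{d(z,\si(A))^{p}},\qquad z\in\Omega.
\]
Choose a conformal map $\psi:\D\to\Omega\cup\{\infty\}$ with $\psi(0)=\infty$ and put $g:=f\circ\psi$; since $f(z)\to 1$ as $|z|\to\infty$, we have $g(0)=1$. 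Distortion theorems for conformal maps onto the complement of a convex compact set give bi-Lipschitz-type comparisons between $d(\psi(\zeta),\si(A))$ and $1-|\zeta|$, modulated by a weight depending only on the geometry of $\si(A)$; transplanting the bound on $\log|f|$ yields an estimate on $\log|g|$ of the Borichev-Golinskii-Kupin form \eqref{hyp:BGK}. Applying Theorem \ref{BGK} then produces a Blaschke-type inequality on the zeros $\zeta_n=\psi^{-1}(\la_n)$ of $g$, which, pulled back through the distortion estimate, gives a summability bound for $\sum_n d(\la_n,\si(A))^{p+\tau}$ in terms of $\|K\|_{\Sg_p}^{p}$.

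The main obstacle is the gap between the soft bound obtained this way, which loses a $\tau$ in the exponent and a geometric multiplicative constant, and the \emph{sharp} statement of Theorem \ref{TH}, where the exponent is exactly $p$ and the constant is exactly $1$. Closing this gap requires a genuinely operator-theoretic argument in place of the complex-analytic extraction of zeros: one realizes $\{d(\la_n,\si(A))\}$ as the moduli of eigenvalues of an auxiliary compact operator whose singular values are dominated termwise by those of $K$, and then invokes Weyl's majorant inequality $\sum|\mu_n|^p\leq\sum s_n(K)^p=\|K\|_{\Sg_p}^p$. Convexity of $\si(A)$ is used essentially at this point, beyond merely ensuring simple connectedness of $\Omega$: it provides a well-defined nearest-point projection onto $\si(A)$ and hence a consistent "perturbation direction" at each $\la_n$, which is precisely what makes the required termwise singular-value domination hold.
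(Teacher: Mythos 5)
First, note that the paper does not actually prove Theorem \ref{TH}: it is imported verbatim from Hansmann \cite{Ha1}, so there is no in-paper proof to match. Judged on its own terms, your attempt has a genuine gap. You correctly diagnose that the complex-analytic route (regularized determinant, conformal transplantation, Theorem \ref{BGK}) cannot deliver the statement: it inevitably produces an exponent $p+\tau$ and a multiplicative constant depending on the geometry of $\si(A)$, whereas the claim has exponent exactly $p$ and constant exactly $1$. But the paragraph you offer to close this gap is not a proof. You posit an ``auxiliary compact operator'' whose eigenvalues are the numbers $d(\la_n,\si(A))$ and whose singular values are dominated termwise by those of $K=B-A$, and then invoke Weyl's inequality. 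No such operator is constructed, and the claimed termwise singular-value domination is exactly the hard content of the theorem; asserting that convexity supplies a ``consistent perturbation direction'' does not produce it. Moreover, Weyl's majorant inequality compares the eigenvalues of an operator with \emph{its own} singular values, so even granting the auxiliary operator you would still need to relate its singular values to those of $K$, which is where the argument is missing.

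For the record, the actual mechanism in \cite{Ha1} is operator-theoretic from the start and does not pass through determinants at all. One takes the (finitely or countably many) eigenvalues $\la_1,\la_2,\dots$ of $B$ in $\si_d(B)$, forms the closed span of the corresponding root subspaces, and chooses a Schur (triangularizing) orthonormal basis $(e_n)$ there, so that $\langle Be_n,e_n\rangle=\la_n$. Then $\langle (B-A)e_n,e_n\rangle=\la_n-\langle Ae_n,e_n\rangle$, and $\langle Ae_n,e_n\rangle$ lies in the closure of the numerical range of $A$, which for a \emph{normal} $A$ is the convex hull of $\si(A)$, hence equals $\si(A)$ by the convexity hypothesis --- this is the precise point where both normality and convexity are used. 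Therefore $d(\la_n,\si(A))\leq |\langle (B-A)e_n,e_n\rangle|$, and the classical majorization of the diagonal of a $\Sg_p$-operator by its singular values (valid for $p\geq 1$) gives $\sum_n |\langle (B-A)e_n,e_n\rangle|^p\leq \|B-A\|_{\Sg_p}^p$, yielding the sharp constant $1$. Your write-up gestures toward something of this flavour but supplies neither the triangularization, nor the numerical-range identification, nor the diagonal majorization, so as it stands the theorem is not proved.
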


Hansmann has  another result in this direction (see \cite[Cor. 1]{Ha2}).
Since in our situation  the spectrum $\si(A)$ is convex, it gives no improvement.


We will apply Theorem \ref{TH} to $(-a-H)^{-1}$ and $(-a-H_0)^{-1}$, and so we introduce the parameter $a>0$.
As in previous section, we need a distortion result.
Introduce a conformal map
$g:\C\backslash \R^+ \rightarrow \bar\C\backslash [-\frac{1}{a},0]$
defined by
\[g(\la)=\dfrac{-1}{a+\la}.\]
Below,  we denote by $\la$ and $\mu$ the variables in
$\C\bsl \R^+$ and $\bar\C\bsl [-\frac{1}{a},0]$, respectively.

\begin{proposition}\label{dist}
For $\la \in \C\bsl \R^+$, we have the following bound
\[ d\left(g(\la),\left[-\dfrac{1}{a},0 \right]\,\right) \geq
\dfrac{1}{2\sqrt{5}}\cdot\dfrac{d\left(\la,\R^+\right)}{(a+|\la|)^2}.\]
\end{proposition}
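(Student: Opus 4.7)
The plan is to write $\lambda = x + \mathrm{i}y$ and to compute $\mu := g(\lambda) = u + \mathrm{i}v$ explicitly, namely
\[
u = -\frac{a+x}{|a+\lambda|^2}, \qquad v = \frac{y}{|a+\lambda|^2}, \qquad |a+\lambda|^2 = (a+x)^2 + y^2.
\]
Since the segment $[-1/a,0]$ lies on the real axis, the distance $d(\mu,[-1/a,0])$ reduces to a standard point-to-segment computation that splits into three regimes according to where $u$ sits relative to $[-1/a,0]$.

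First I would identify the three cases and derive an explicit expression for the distance in each. In case (a), $u \in [-1/a,0]$ (equivalently $\mathrm{Re}(\lambda) \geq -a$ and $x(a+x) + y^2 \geq 0$), and the distance equals $|v| = |y|/|a+\lambda|^2$. In case (b), $u > 0$ (equivalently $\mathrm{Re}(\lambda) < -a$), and the distance equals $|\mu| = 1/|a+\lambda|$. In case (c), $u < -1/a$, which forces $-a < \mathrm{Re}(\lambda) < 0$ and $y^2 < -x(a+x)$; a direct algebraic simplification (expanding $(u+1/a)^2 + v^2$ and using $x^2+y^2 = |\lambda|^2$) yields the identity
\[
\left( u + \frac{1}{a} \right)^2 + v^2 = \frac{|\lambda|^2}{a^2|a+\lambda|^2},
\]
so that $d(\mu,[-1/a,0]) = |\lambda|/(a\,|a+\lambda|)$.

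With these three explicit formulas in hand, the conclusion would follow in each case from the elementary inequalities $|a+\lambda| \leq a + |\lambda|$ and $d(\lambda,\R^+) \leq |\lambda|$, together with Lemma~\ref{i-lem}. Cases (b) and (c) are essentially immediate: one bounds $|a+\lambda|$, respectively $a\,|a+\lambda|$, above by a multiple of $(a+|\lambda|)^2$ and uses $d(\lambda,\R^+) \leq |\lambda| \leq a + |\lambda|$. The main obstacle is case (a) in the subregime $-a \leq \mathrm{Re}(\lambda) < 0$, where $d(\lambda,\R^+) = |\lambda|$ but the distance formula only gives $|y|/|a+\lambda|^2$; here one must exploit the defining case-(a) constraint $y^2 \geq -x(a+x)$ to control $|y|/|\lambda|$ from below. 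Writing $t = -x \in (0,a]$, combining $y^2 \geq t(a-t)$ with $|a+\lambda|^2 \leq (a+|\lambda|)^2$, and absorbing the resulting numerical constants via Lemma~\ref{i-lem} delivers the factor $1/(2\sqrt{5})$ claimed in the statement.
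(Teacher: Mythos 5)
Your setup is correct and genuinely more self-contained than the paper's proof, which composes $g$ with a translation to reduce to $\widetilde g(\la)=1/\la$ and then cites an external reference for the resulting distortion bound. Your explicit formulas for $u$ and $v$, the three-regime decomposition of the point-to-segment distance, the identity $(u+1/a)^2+v^2=|\la|^2/(a^2|a+\la|^2)$ in case (c), and the disposal of cases (b) and (c) (and of case (a) when $\mathrm{Re}(\la)\ge 0$) are all fine.

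The gap is in the very subregime you single out as the main obstacle, and it lies in the mechanism you propose there: on case (a) intersected with $\{-a\le x<0\}$ the ratio $|y|/|\la|$ admits \emph{no} positive lower bound, so it cannot be "controlled from below" by the constraint $y^2\ge -x(a+x)=t(a-t)$. Indeed at $x=-a$ (i.e.\ $t=a$) that constraint degenerates to $y^2\ge 0$, so $\la=-a+\mathrm{i}\varepsilon$ is admissible for every $\varepsilon>0$ and $|y|/|\la|=\varepsilon/\sqrt{a^2+\varepsilon^2}\to 0$. Consequently the chain ``enlarge the denominator via $|a+\la|^2\le(a+|\la|)^2$, then prove $|y|\gtrsim|\la|$'' fails for $t$ near $a$: the second step is false there, and the first step discards exactly the information that saves the inequality, namely that $|a+\la|^2$ is itself small. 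A correct treatment splits case (a) once more at $t=a/2$. For $t\ge a/2$ one has $(a-t)^2\le t(a-t)\le y^2$, hence $|a+\la|^2\le 2y^2$ and
\[
\frac{|y|}{|a+\la|^2}\;\ge\;\frac{1}{2|y|}\;\ge\;\frac{1}{2|\la|}\;\ge\;\frac{1}{2}\cdot\frac{|\la|}{(a+|\la|)^2},
\]
with no lower bound on $|y|/|\la|$ required. Your mechanism does work on the complementary piece $t<a/2$: there $y^2\ge t(a-t)\ge ta/2$ gives $t\le 2y^2/a$, and distinguishing $|y|\le a$ from $|y|>a$ yields $|\la|\le\sqrt{5}\,|y|$ in both instances, which is where the constant $\sqrt{5}$ actually enters. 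So the proposition and your overall strategy are sound, but as written the key step fails on a nonempty region and the proof needs this additional subdivision and the alternative estimate on the inner half.
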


\begin{proof}
We obtain a bound for the function
$\widetilde{g}: \C\backslash [a;+\infty[ \rightarrow \bar\C\backslash [0;\frac{1}{a}]$ defined by $\widetilde{g}(\la)=\frac{1}{\la}$ and then compose it by the translation $T:\la\mapsto \la+a$, that is $g=-\widetilde{g}\circ T$.
After some technical computations (see \cite{Du3}), we  find
\begin{equation*}
d\left(\widetilde{g}(\la),\left[0;\frac{1}{a} \right]\,\right) \geq
\dfrac{1}{\sqrt{5}}\cdot\dfrac{d\left(\la,[a;+\infty[\right)}{|\la|\cdot (a+|\la|)}.
\end{equation*}
The claimed inequality follows.
\end{proof}


As in proofs of Theorems \ref{T1} and \ref{T1b}, we use an integration with respect to the parameter $a$,
to improve the rate of convergence in the left-hand side of inequality \eqref{t2}.
This  trick is borrowed from  Theorem 5.3.3 in \cite{DeHaKa1}.
We recall that $\omega$ is defined in \eqref{maj:opp}.

\subsection{Proof of Theorem \ref{T2}.}
We put $A=(-a-H_0)^{-1}$ which is normal and $B=(-a-H)^{-1}$ which is bounded,
for $a > \omega $, so that $A$ and $B$ exist.
We know that $B-A=BVA \in \Sg_p$,
hence we can apply Theorem \ref{TH}.
For $p\geq 1$, it gives
\begin{equation}\label{eq1}
\ds\sum_{\mu \in \si_d(B)} d\left(\mu,\si(A)\right)^p
\leq \| B-A \|_{\Sg_p}^p.
\end{equation}
For $p>\max\{1;\frac{d}{2s}\}$, we bound the right-hand side of inequality \eqref{eq1}
with the help of Proposition \ref{det-BS}, and the inequalities \eqref{maj:opp} and \eqref{br-0}
\begin{align*}
\| B-A \|_{\Sg_p}^p & \leq (2\pi)^{-d}
\|(-a-H)^{-1}\|^p \cdot \|V\|_{L^p}^p \cdot \|(-a-|x|^{2s})^{-1}\|_{L^p}^p \nonumber\\
  & \leq  K_1 \, C_{\omega}^p \cdot \dfrac{a^{\frac{d}{2s}-p} }{|\omega-a|^p} 
  \cdot \|V\|_{L^p}^p,
\end{align*}
where
\begin{equation}\label{const:3}
K_1= \dfrac{\mathsf{v}_{d-1}}{2s(2\pi)^d}  \cdot \ds\int_{\R^+} \dfrac{t^{\frac{d}{2s}-1}}{(t^2+1)^{p/2}}\,dt.
\end{equation}
Then
$\mu=(-a-\la)^{-1}=g(\la) \in \si_d(B)$ if and only if
$\la \in \si_d(H)$, hence
\begin{align*}
\ds\sum_{\mu \in \si_d(B)} d\left(\mu,\si(A)\right)^p & =
\ds\sum_{\{g(\la), \la \in \si_d(H)\}} d\left(g(\la),\si(A)\right)^p \nonumber\\
& \geq \dfrac{1}{(2\sqrt{5})^{p}} \cdot \ds\sum_{\la \in \si_d(H)}
\dfrac{d\left(\la,\si(H_0)\right)^p}{(a+|\la|)^{2p}}.
\end{align*}
The last inequality results from Proposition \ref{dist}.
Thus, we obtain
\begin{equation}\label{i:Lf}
 \ds\sum_{\la \in \si_d(H)} \dfrac{d(\la,\si(H_0))^{p}}
{(a+|\la|)^{2p}}
\leq  (2\sqrt{5})^p\, K_1 \cdot C_{\omega}^p \cdot \dfrac{a^{\frac{d}{2s}-p}}{|\omega-a|^p}  \cdot \|V\|^p_{L^p},
\end{equation}
where $K_1$ is defined in \eqref{const:3}.


The next step of the proof is the integration with respect to parameter $a$.
Since the computations are similar to the integration performed in Section \ref{s-proof1},
the technical details are omitted.
We obtain from  \eqref{i:Lf}
\begin{equation*}
\ds\sum_{\la \in \si_d(H)} 
d(\la,\si(H_0))^{p} \ds\int_{\omega}^{+\infty} 
\dfrac{a^{p-\frac{d}{2s}-1-\tau}|\omega-a|^p}{(a+|\la|)^{2p}} \,da
   \lesssim  \dfrac{\|V\|^p_{L^p}}{\tau \omega^{\tau}}.
\end{equation*}

Hence, assuming first that $p-d/2s > 1$, we come to
\begin{equation*}
\ds\int_{\omega}^{+\infty} 
\dfrac{a^{p-\frac{d}{2s}-1-\tau}|a-\omega|^p da}{(a+|\la|)^{2p}}
\geq \dfrac{1}{(|\la|+\omega)^{\frac{d}{2s}+\tau}}
\ds\int_{0}^{+\infty} \dfrac{t^{2p-\frac{d}{2s}-1-\tau}}{(t+1)^{2p}} \,dt.
\end{equation*}
When $p-d/2s <1$, we have
\begin{equation*}
\ds\int_{\omega}^{+\infty} 
\dfrac{a^{p-\frac{d}{2s}-1-\tau}|a-\omega|^p da}{(a+|\la|)^{2p}} 
 \geq \dfrac{1}{(|\la|+\omega)^{\frac{d}{2s}+\tau}}
\ds\int_{0}^{+\infty} \dfrac{t^p \,dt}{(t+1)^{p+\frac{d}{2s}+1+\tau}}.
\end{equation*}
Hence
\begin{equation*}
\ds\sum_{\la \in \si_d(H)} 
\dfrac{d(\la,\si(H_0))^{p}}{(\omega+|\la|)^{\frac{d}{2s}+\tau}}
   \leq (2\sqrt{5})^p \cdot \dfrac{K_1\, C_{\omega}^p}{I \, \tau \omega^{\tau}} \cdot \|V\|^p_{L^p},
\end{equation*}
where $K_1$ is defined in \eqref{const:3}, and
\[I=\ds\int_{0}^{+\infty} \dfrac{t^{p+(p-\frac{d}{2s}-1-\tau)_+}}{(t+1)^{p+\frac{d}{2s}+1+\tau+(p-\frac{d}{2s}-1-\tau)_+}} \,dt.\]
Using $\omega +|\la| \leq \omega (1+|\la|)$,
the proof of Theorem \ref{T2} is complete.
\hfill $\Box$


\bibliographystyle{alpha}
\bibliography{biblio}

\end{document}